\title{Integral equations requiring small numbers of Krylov-subspace
iterations for two-dimensional penetrable scattering problems}
\author{Yassine Boubendir, Oscar Bruno, David Levadoux, 
Catalin Turc\\ \small 
New Jersey Institute of Technology, Caltech, ONERA France, New Jersey Institute of Technology\\
\small boubendi@njit.edu,\ obruno@caltech.edu,\ david.levadoux@onera.fr,\ catalin.c.turc@njit.edu}
\newtheorem{theorem}{Theorem}[section]
\newtheorem{lemma}[theorem]{Lemma}
\newtheorem{remark}[theorem]{Remark}
\newenvironment{proof}{\hspace{0.5cm} {\bf Proof.}}
{$\quad {}_\blacksquare$\vspace{0.3cm}}
\date{}
\begin{document}
\maketitle
\begin{abstract}
This paper presents a class of boundary integral equations for the
  solution of problems of electromagnetic and acoustic scattering by
  two dimensional homogeneous penetrable scatterers with smooth
  boundaries.
  % We refer to these integral equations as Generalized Combined Source
  % Integral Equations (GCSIE).
  The new integral equations, which, as is established in this paper,
  are uniquely solvable Fredholm equations of the second kind, result
  from representations of fields as combinations of single and double
  layer potentials acting on appropriately chosen regularizing
  operators.  As demonstrated in this text by means of a variety of
  numerical examples (that resulted from a high-order Nystr\"om
  computational implementation of the new equations), these
  ``regularized combined equations'' can give rise to important
  reductions in computational costs, for a given accuracy, over those
  resulting from previous boundary integral formulations for
  transmission problems.  \newline \indent $\mathbf{Keywords}$:
  electromagnetic scattering, transmission problems, Combined Field
  Integral Equations, pseudo-differential operators, regularizing
  operator.
\end{abstract}

\section{Introduction\label{intro}}

Owing mainly to the dimensional reduction and the absence of
dispersion errors inherent in boundary integral formulations,
scattering solvers based on boundary integral equations can be
significantly more efficient than solvers resulting from direct
discretization of the corresponding Partial Differential Equations
(PDE). In view of the large dense systems of linear equations that
result from integral equation formulations, however, integral-equation
solvers often rely on use of Krylov subspace iterative solvers in
conjunction with fast algorithms~\cite{br-k1,bles,r3} for evaluation
of matrix vector products.  Since the numbers of iterations required
by Krylov subspace solvers for a given system of equations depends
greatly on the spectral character of the system matrix, the efficiency
of boundary iterative integral solvers hinges on the spectral
properties of the integral formulations used.

In this paper we introduce a new class of boundary integral equations
for the solution of electromagnetic and acoustic transmission problems
(that is, problems of diffraction and scattering by penetrable
scatterers) for two dimensional homogeneous penetrable scatterers with
smooth boundaries. The new formulations, which, as established in this
paper, are expressed in terms of uniquely solvable Fredholm equations
of the second kind, result from representations of fields as
combinations of single and double layer potentials acting on
regularizing operators analogous to those used
in~\cite{bept,br-turc,br-lintner-3d,br-lintner,AlougesLevadoux,BorelLevadouxAlouges,Antoine,AntoineX,AntoineX,Levadoux,Levadoux1,Levadoux2} for non-penetrable
scatterers.  As demonstrated in this text by means of a variety of
numerical examples, for a given accuracy, the new ``regularized
combined equations'' can give rise to significant reductions in
iteration numbers, and therefore, in overall computational costs, over
those resulting from previous boundary integral formulations.

Uniquely solvable formulations for transmission scattering problems
have been available for quite some time~\cite{muller}.  For example, a
class of such integral equations results from representation of the
fields inside and outside the dielectric scatterer by means of Green's
formula: linear combinations the interior and exterior boundary values
of the fields and their normal derivatives can be used to produce
various types of pairs of boundary integral equations with two unknown
functions, including: (1)~A family of integral equations {\em of the
  second kind} (which feature multiples of the identity operator and
linear combinations of compact operators, including the single and
double layer operators as well as suitable {\em compact} differences
of hypersingular integral
operators~\cite{muller,KressRoach,KittapaKleinman,rokhlin-dielectric});
and (2)~Integral equations {\em of the first kind with positive
  principal part} which include non-compact hypersingular
operators. Equations of the type~(1) and~(2) are used most frequently
as part of integral solvers for wave transmission problems.

(It is also possible to express the transmission problem in terms of a
{\em single} uniquely solvable integral equation; the
paper~\cite{KleinmanMartin} presents a comprehensive discussion in
these regards. It is useful to mention here that, while single
transmission integral equations contain half as many unknowns as the
corresponding systems, they require operator compositions which, in
the context of iterative solvers considered in this paper, may lead to
equal or even higher computing costs than the system formulations such
as those mentioned above.)

This paper presents novel {\em regularized} integral equation
formulations for the transmission problem. Unlike the second-kind
formulations introduced earlier, which rely on cancellation of
hypersingular terms in the integral kernels, the present approach
produces second-kind formulations via {\em regularization of
  hypersingular operators by means of suitable regularizing
  operators}~\cite{bept,br-turc,br-lintner-3d,br-lintner,Levadoux1,Levadoux2}.  In
particular, use of regularizing operators given by layer-potentials
with {\em complex wavenumbers} results in second-kind Fredholm
equations with improved spectral properties.  Similar regularized
equations result from use of regularizing operators expressed in terms
of {\em Fourier operators} whose symbols have the same high-frequency
asymptotics as the corresponding layer-potential regularizers
mentioned above.

In order to demonstrate the properties of the new equations we
introduce Nystr\"om implementations of the previous and new
formulations considered. These implementations, which are based on the
methods introduced in~\cite{kusmaul,martensen,KressH,turc1}, include
elements such as approximation via global trigonometric polynomials,
splitting of integral kernels into singular and smooth components, and
explicit quadrature of products of logarithmically singular terms and
trigonometric polynomials.

This paper is organized as follows: Section~\ref{cfie} reviews the
classical systems of integral equations for the solution of
transmission scattering problems, it introduces the new regularized
formulations, and it establishes their unique solvability and
second-kind Fredholm character in the appropriate functional
spaces. The reductions in iteration numbers and computing times that
result from the new formulations are demonstrated in
Section~\ref{prec}.

\section{Acoustic transmission problems\label{cfie}}

We consider the problem of evaluation of the time-harmonic fields that
arise as an incident field $u^{inc}$ impinges upon a homogeneous
dielectric scatterer which occupies a bounded region
$\Omega_2\subset\mathbb{R}^2$. Calling $\Omega_1 =
\mathbb{R}^2\setminus \Omega_2$ the region exterior to the obstacle,
in what follows $\epsilon_1$ and $\epsilon_2$ denote the electric
permeabilities of the materials in regions $\Omega_1$ and $\Omega_2$,
respectively; the permeabilities of both dielectrics is assumed to
equal $\mu_0$, the permeability of vacuum. Letting $u^1$ and $u^2$
denote the scattered field in $\Omega_1$ and the total field in
$\Omega_2$, respectively, calling $\Gamma$ the boundary between the
domains $\Omega_1$ and $\Omega_2$ (which, throughout this text is
assumed to be smooth), and given an incident field $u^{inc}$ for which
\begin{equation}
  \label{eq:Maxwell_inc}
  \Delta u^{inc}+k_1^2 u^{inc}=0 \qquad \rm{in}\ \mathbb{R}^2,
\end{equation}
the fields $u^1$ and $u^2$ can be obtained as the solutions to the
 Helmholtz equations
\begin{equation}
  \label{eq:Ac_e}
  \Delta u^1+k_1^2 u^1=0\qquad \rm{in}\ \Omega_1,
\end{equation}
\begin{equation}
  \label{eq:Ac_i}
  \Delta u^2+k_2^2 u^2=0 \qquad \rm{in}\ \Omega_2,
\end{equation}
which satisfy the transmission conditions
\begin{equation}
\label{eq:bc}
\begin{split}
 u^1 + u^{inc} &=u^2\qquad \rm{on}\ \Gamma \\
\frac{\partial u^1}{\partial n} + \frac{\partial u^{inc}}{\partial n}&=\nu \frac{\partial u^2}{\partial n}\qquad \rm{on}\ \Gamma
\end{split}
\end{equation}
as well as the Sommerfeld radiation condition at infinity:
\begin{equation}\label{eq:radiation}
\lim_{|r|\to\infty}r^{1/2}(\partial u^1/\partial r - ik_1u^1)=0.
\end{equation}
In these equations $n$ denotes the unit normal on $\Gamma$ that points
into $\Omega_2$ and $k_j$ denotes the wavenumber in the region $D_j$
($j=1$, $2$); for a given frequency $\omega$ we have
$k_i=\omega\sqrt{\mu_0\epsilon_j}$. The constant $\nu$
in~\eqref{eq:bc}, finally, is given by $\nu=1$ under $E$-polarization
and $\nu=\epsilon_1/\epsilon_2$ under $H$-polarization.

\begin{remark}
  As is well known, the transmission
  problem~\eqref{eq:Ac_e}-\eqref{eq:radiation} admits a unique
  solution provided the wavenumbers $k_j$, $j=1$, $2$ are real
  numbers~\cite{KressRoach,KleinmanMartin}.  In this text it is
  assumed that either $k_j$, $j=1$, $2$ are real numbers or,
  otherwise, that equations~\eqref{eq:Ac_e}-\eqref{eq:radiation} admit
  unique solutions.
\end{remark}

\section{Classical boundary integral formulations\label{di_ind_cfie}}
A variety of integral equation formulations for the transmission
problem~\eqref{eq:Ac_e}-\eqref{eq:radiation} exist
(e.g.~\cite{KressRoach,costabel-stephan,KleinmanMartin,muller}). In
the spirit of the general boundary integral method, these formulations
express the solution of equation~\eqref{eq:Ac_e}-\eqref{eq:radiation}
in terms of boundary integral representations that include free-space
Green functions and surface potentials as well as unknown functions
defined on the boundary $\Gamma$ (the so-called ``integral
densities''); the unknown densities, in turn, are determined as
solutions of corresponding integral equations.

The two-dimensional free-space Green function $G_k$ for a wavenumber
$k$ is given by
\begin{equation}
G_k(\mathbf
x)=\frac{i}{4}H_0^1(k|\mathbf x|).
\end{equation} 
Given a wavenumber $k$, the single and double layer potentials for the
wavenumber $k$ and with respective densities $\varphi$ and $\psi$ are
functions defined throughout $\mathbb{R}^2\setminus\Gamma$ by means of
the integral expressions
\[
S_k[\varphi](\mathbf{z})=\int_\Gamma G_k(\mathbf z - \mathbf
y)\varphi(\mathbf y)ds(\mathbf y),\quad
\mathbf{z}\in\mathbb{R}^2\setminus\Gamma\quad \mbox{(Single Layer
  Potential)}
\]
and
\[
D_k[\psi](\mathbf{z})=\int_\Gamma \frac{\partial
  G_k(\mathbf z - \mathbf y)}{\partial n(\mathbf y)}\psi(\mathbf
y)ds(\mathbf y),\ \mathbf{z}\in\mathbb{R}^2\setminus\Gamma\quad
\mbox{(Double Layer Potential)}.
\]
Clearly $S_k[\varphi]$ and $D_k[\psi]$ are solutions of the Helmholtz
equation $\Delta \psi + k^2\psi =0$ in $\mathbb{R}^2\setminus\Gamma$.

% . The classical direct integral formulations, for
% example, pose the integral equations in terms of physical densities,
% namely, the boundary values on $\Gamma$ of the total exterior field
% $u=u^1+u^{inc}$ as well as its normal derivative $\frac{\partial
%   u}{\partial n}=\frac{\partial u^1}{\partial n} + \frac{\partial
%   u^{inc}}{\partial n}$.

The Green identities are often utilized in derivations of integral
equation formulations; in the present context they can be used to
express the solutions $u_1$ and $u_2$ in terms of single and double
layer potentials on $\Gamma$:
\begin{equation}\label{Green1}
  u^1(\mathbf z)= D_{k_1}[u^1](\mathbf{z})-S_{k_1}\left[\frac{\partial u^1}{\partial n}\right](\mathbf{z}),\ \mathbf{z}\in \Omega_1
\end{equation} 
and
\begin{equation}\label{Green2}
  u^2(\mathbf z) = -D_{k_2}[u^2](\mathbf{z})+S_{k_2}\left[\frac{\partial u^2}{\partial n}\right](\mathbf{z}),\ \mathbf{z}\in \Omega_2.
\end{equation}
Equations for the unknown boundary values of $u^j$ and $\frac{\partial
  u^j}{\partial n}$ on $\Gamma$ are obtained by enforcing the
transmission conditions~\eqref{eq:bc}; once these unknown functions
have been obtained the representation formulae~\eqref{Green1}
and~\eqref{Green2} can be used to evaluate the solutions $u^1$ and
$u^2$ throughout $\Omega_1$ and $\Omega_2$. In view of the use of Green's
formula in conjunction with the physical quantities $u^j$ and
$\frac{\partial u^j}{\partial n}$ as integral densities, the integral
equations obtained in this manner are called ``Direct''. Other
integral equations, such as the regularized integral equations
introduced in this paper, are ``Indirect'': the corresponding unknown
densities are not directly identifiable physical quantities.

In addition to Green's identities~\eqref{Green1} and~\eqref{Green2},
exterior and interior boundary values of single and double layer
potentials together with exterior and interior boundary values of the
normal derivatives of these potentials are needed in the derivation of
integral equations for transmission problems. A number of boundary
integral operators thus arise for a given wavenumber $k$, including
the {\em single layer} operator
\begin{equation}\label{eq:sl}
  S_k[\varphi](\mathbf x)=\int_\Gamma G_k(\mathbf x -\mathbf y)\varphi(\mathbf y)ds(\mathbf y),\ \mathbf{x}\ {\rm on} \ \Gamma ,
\end{equation} 
the {\em double layer} operator $D_k$ and its {\em adjoint} $D^*_k$
\begin{equation}
\label{eq:double}
D_k[\varphi](\mathbf x)=\int_{\Gamma}\frac{\partial G_k(\mathbf x-\mathbf y)}{\partial n(\mathbf y)}\varphi(\mathbf y)ds(\mathbf y),\ \mathbf x\ {\rm on}\ \Gamma
\end{equation}
and 
\begin{equation}
\label{eq:adj_double}
D_k^*[\varphi](\mathbf x)=\int_{\Gamma}\frac{\partial G_k(\mathbf x-\mathbf y)}{\partial n(\mathbf x)}\varphi(\mathbf y)ds(\mathbf y),\ \mathbf x\ {\rm on}\ \Gamma,
\end{equation}
and the {\em hypersingular} operator
\begin{equation}\label{eq:normal_double}
  \begin{split}
N_k [\varphi](\mathbf x) &= {\rm PV} \int_\Gamma \frac{\partial^{2}G_k(\mathbf x -\mathbf y)}{\partial n(\mathbf x) \partial n(\mathbf y)} \varphi(\mathbf y)ds(\mathbf y)\\
&=k^{2}\int_\Gamma G_k(\mathbf x -\mathbf y)
(n(\mathbf x)\cdot n(\mathbf y))\varphi(\mathbf y)ds(\mathbf y)+ {\rm PV}
\int_\Gamma \partial_s G_k(\mathbf x -\mathbf y)\partial_s \varphi(\mathbf y)ds(\mathbf y).
\end{split}
\end{equation}
The operator $N_k$ equals the limit of the normal derivative of the
double layer potential on $\Gamma$ from the outside of the domain.
Both expressions in~\eqref{eq:normal_double} contain Cauchy Principal
Value integrals; the second line in this equation, further, utilizes
the tangential derivative operator $\partial_s$ on the curve $\Gamma$.

The mapping properties of the boundary integral operators above in
appropriate Sobolev spaces $H^{s}(\Gamma)$ of functions defined on the
curve $\Gamma$~\cite{Kress,Saranen}) ($s\in \mathbb{R}$) play central
roles in determination of existence and uniqueness of solutions for
the integral equations of scattering theory. The following theorem
outlines relevant results in these regards. (In what follows and
throughout this paper, for a complex number $z$ the symbols $\Re(z)$
and $\Im(z)$ denote the real and imaginary parts of $z$.)
\begin{theorem}\label{regL} 
  Let $k$ be a complex number satisfying $\Re{k}\geq 0$ and
  $\Im(k)\geq 0$ and assume $\Gamma$ is a smooth curve. Then for all
  $s\geq 0$ the boundary integral
  operators~\eqref{eq:sl}--\eqref{eq:normal_double} define continuous
  mappings $S_k:H^{s-1/2}(\Gamma)\to H^{s+1/2}(\Gamma)$;
  $N_k:H^{s+1/2}(\Gamma)\to H^{s-1/2}(\Gamma)$;
  $D_k^*:H^{s+1/2}(\Gamma)\to H^{s+7/2}(\Gamma)$; and
  $D_k:H^{s+1/2}(\Gamma)\to H^{s+7/2}(\Gamma)$ for all $s\geq 0$.
\end{theorem}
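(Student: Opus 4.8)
The plan is to reduce all four mapping statements to the local singularity structure of the kernels near the diagonal of $\Gamma\times\Gamma$, and then to invoke the standard Sobolev boundedness results for integral operators with such singularities on smooth closed curves~\cite{Kress,Saranen}. The starting point is the explicit splitting of the fundamental solution into its singular and regular parts: using the series representation of the Hankel function one has, for $|x|$ small,
\[
G_k(x) = -\frac{1}{2\pi}J_0(k|x|)\log|x| + R_k(x),
\]
where $R_k$ is an even, real-analytic function of $|x|$ and $J_0(k|x|)=1-\tfrac14 k^2|x|^2+\cdots$ is entire and even in $|x|$. Consequently the only singular contributions to $G_k$ form the hierarchy $\log|x|,\ |x|^2\log|x|,\ |x|^4\log|x|,\dots$, and the hypothesis $\Re k\ge 0$, $\Im k\ge 0$ ensures that $G_k$ is a well-defined fundamental solution with precisely this local structure. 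I would then localize each operator by a partition of unity, writing its kernel as a near-diagonal singular piece plus a $C^\infty$ piece; the latter is infinitely smoothing and hence continuous between any pair of Sobolev spaces, so that the mapping order is dictated entirely by the leading near-diagonal singularity.

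For $S_k$ the leading singularity is $-\frac{1}{2\pi}\log|x-y|$, the kernel of a classical weakly singular operator of order $-1$; the stated continuity $S_k:H^{s-1/2}\to H^{s+1/2}$ then follows from the standard theory of logarithmic-kernel operators on smooth curves. The hypersingular operator is most conveniently handled through the second representation in~\eqref{eq:normal_double}: its first term is an $S_k$-type operator (order $-1$) carrying the smooth weight $n(x)\cdot n(y)$, while its second term is the tangential derivative $\partial_s$ (contributing $+1$) applied after a Cauchy-type operator built from $\partial_s G_k$, whose leading odd singularity $\sim 1/(u-v)$ in a parametrization $x=\gamma(u)$, $y=\gamma(v)$ is of order $0$; the net order is $+1$, yielding $N_k:H^{s+1/2}\to H^{s-1/2}$.

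The delicate cases are $D_k$ and $D_k^*$, where the gain of three derivatives rests on a cancellation specific to smooth curves. Writing the double-layer kernel as $-(\nabla G_k)(x-y)\cdot n(y)$ and differentiating the expansion above, the principal contribution $-\frac{1}{2\pi}\partial_{n(y)}\log|x-y|$ produces the Laplace double-layer kernel, proportional to $(x-y)\cdot n(y)/|x-y|^2$; on a smooth curve $(x-y)\cdot n(y)=O(|x-y|^2)$, so this term is in fact bounded (indeed smooth) and contributes nothing singular. The same $O(|x-y|^2)$ factor annihilates the $1/|x-y|$ and $O(1)$ pieces of $f'(|x-y|)/|x-y|$, and the surviving singularity, coming from the $\tfrac{k^2}{8\pi}|x-y|^2\log|x-y|$ term of $G_k$, is of size $O(|x-y|^2\log|x-y|)$; a kernel with this singularity defines an operator of order $-3$, which is exactly the claimed gain $H^{s+1/2}\to H^{s+7/2}$. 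The identical argument, with $n(x)$ in place of $n(y)$ and $(x-y)\cdot n(x)=O(|x-y|^2)$, disposes of $D_k^*$.

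The main obstacle I anticipate is precisely this $D_k$/$D_k^*$ analysis: one must verify carefully that the leading Laplace singularity cancels on \emph{smooth} (not merely analytic) curves and, more importantly, that \emph{no} singularity worse than $|x-y|^2\log|x-y|$ survives, so that the order is no larger than $-3$ and the target space is exactly $H^{s+7/2}$. This requires keeping track of the smooth geometric coefficients (curvature and its derivatives) that multiply each term of the Hankel expansion and confirming that no intermediate singular term between orders $-1$ and $-3$ persists after contraction with the normal. Once this singularity bookkeeping is in place, the Sobolev continuity statements follow as routine consequences of the mapping properties of logarithmic- and power-logarithmic-kernel operators on smooth closed curves.
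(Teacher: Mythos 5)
Your proposal is correct and follows essentially the same route as the paper: the paper also treats the $S_k$ and $N_k$ statements as classical (citing~\cite{Saranen}), and proves the $D_k$, $D_k^*$ claims by exhibiting the kernel's surviving $|x(t)-x(\tau)|^2\log(4\sin^2\frac{t-\tau}{2})$ singularity---via Kress's parametric splitting with the $\frac{J_1(k|x(t)-x(\tau)|)}{|x(t)-x(\tau)|}\,\frac{(x(t)-x(\tau))\cdot n(t)}{|x(t)-x(\tau)|^2}$ factor, which is exactly the cancellation $(x-y)\cdot n=O(|x-y|^2)$ plus Hankel-expansion bookkeeping you carry out by hand---concluding the kernel is $C^1$ with log-singular second derivatives and hence three orders smoothing. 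Your extra care about intermediate singular orders is precisely the content of that splitting, so there is no gap.
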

\begin{proof}
  The first two properties are classical~\cite{Saranen}. The last two
  properties result from the increased regularity of the corresponding
  kernels. Indeed, assuming that the boundary curve $\Gamma$ is given
  by $\Gamma=\{x(t)=(x_1(t),x_2(t)),\quad 0\leq t\leq 2\pi\}$ where
  $x:\mathbb{R}\to \mathbb{R}^2$ is smooth and $2\pi$ periodic, the
  operators $D_k^*$ can be expressed~\cite{Kress} in the parametric form
\begin{equation}\label{dstarparam}
(D_k^*\varphi)(x(t))=\frac{1}{|x'(t)|}\int_0^{2\pi}L(t,\tau)\varphi(x(\tau))d\tau
\end{equation}
where the kernel $L(t,\tau)$ is given by
$$L(t,\tau)=\frac{k}{4\pi}\frac{J_1(k|x(t)-x(\tau)|)}{|x(t)-x(\tau)|}\ \frac{(x(t)-x(\tau))\cdot n(t)}{|x(t)-x(\tau)|^2}\ |x(t)-x(\tau)|^2\log\left(4\sin^2{\frac{t-\tau}{2}}\right)+L_1(t,\tau)$$
where $L_1$ is a smooth function of the variables $t$ and $\tau$ for
all $(t,\tau)\in[0,2\pi]\times[0,2\pi]$, and where $J_1$ denotes the
first order Bessel function of the first kind. It is easy to check
that for smooth curves $\Gamma$ the kernel $L$ is a $C^1$ function of
the variables $t$ and $\tau$ for $(t,\tau)\in[0,2\pi]\times[0,2\pi]$
whose second derivative has a logarithmic singularity. Thus, $D_k^*$
is regularizing of three orders, that is $D_k^*:H^{s+1/2}(\Gamma)\to
H^{s+7/2}(\Gamma)$. Similar considerations lead to the result about
the mapping properties of the operator $D_k$.
\end{proof}

In addition to the mapping properties laid down in Theorem~\ref{regL},
below in this text we also utilize additional smoothing properties
presented in Theorem~\ref{thmrev2} that arise as certain differences
of boundary integral operators are considered.
\begin{theorem}\label{thmrev2}
  Let two wavenumbers $\kappa_1$ and $\kappa_2$ such that
  $\Re{\kappa_j}\geq 0$ and $\Im(\kappa_j)\geq 0$ be given. Then, the
  operators $S_{\kappa_1}-S_{\kappa_2}$ and
  $N_{\kappa_1}-N_{\kappa_2}$ are regularizing operators by three
  orders and by one order, respectively. More precisely, for all
  $s\geq 0$ these operator differences define continuous mappings
  $S_{\kappa_1}-S_{\kappa_2}:H^{s-1/2}(\Gamma)\to H^{s+5/2}(\Gamma)$
  and $N_{\kappa_1}-N_{\kappa_2}:H^{s+1/2}(\Gamma)\to
  H^{s+3/2}(\Gamma)$.
\end{theorem}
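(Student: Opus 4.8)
The plan is to reduce both statements to a single analytic fact: the logarithmic singularities of the two free-space Green functions coincide, so their difference is two orders smoother than either kernel. Using the standard Bessel-function expansion of the Hankel function $H_0^1$ one has, for each wavenumber $k$,
\[
G_k(\mathbf x) = -\frac{1}{2\pi}J_0(k|\mathbf x|)\log|\mathbf x| + g_k(\mathbf x),
\]
where $g_k$ is a smooth (in fact real-analytic) function of $\mathbf x$ and $J_0$ is the zeroth-order Bessel function of the first kind. Since $J_0(\kappa_1 r)-J_0(\kappa_2 r)=-\tfrac14(\kappa_1^2-\kappa_2^2)r^2+O(r^4)$ vanishes to second order at $r=0$, the logarithmic part of
\[
G_{\kappa_1}(\mathbf x)-G_{\kappa_2}(\mathbf x)=-\frac{1}{2\pi}\bigl(J_0(\kappa_1|\mathbf x|)-J_0(\kappa_2|\mathbf x|)\bigr)\log|\mathbf x|+\bigl(g_{\kappa_1}(\mathbf x)-g_{\kappa_2}(\mathbf x)\bigr)
\]
is of the form $|\mathbf x|^2\log|\mathbf x|$ times a smooth factor. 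First I would record that, when parametrized as in~\eqref{dstarparam}, the resulting kernel of $S_{\kappa_1}-S_{\kappa_2}$ is therefore a $C^1$ function of $(t,\tau)$ whose second derivatives carry at most a logarithmic singularity, which is precisely the regularity class that, as used in the proof of Theorem~\ref{regL}, yields an operator that is regularizing by three orders. This gives the first claim, $S_{\kappa_1}-S_{\kappa_2}:H^{s-1/2}(\Gamma)\to H^{s+5/2}(\Gamma)$.

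For the hypersingular difference I would work from the representation~\eqref{eq:normal_double}, which splits $N_k$ into the weakly singular term $k^2\int_\Gamma G_k\,(n(\mathbf x)\cdot n(\mathbf y))\varphi\,ds$ and the principal-value term $\mathrm{PV}\int_\Gamma \partial_s G_k\,\partial_s\varphi\,ds$. For the latter, the key point is that after differencing the kernel $\partial_s(G_{\kappa_1}-G_{\kappa_2})$ is continuous — the $1/|\mathbf x-\mathbf y|$ singularities of the individual kernels cancel — so no principal value is needed and the tangential derivative in the $\mathbf x$ variable may be pulled outside the integral. This identifies the differenced principal-value term with the composition $\partial_s\,(S_{\kappa_1}-S_{\kappa_2})\,\partial_s$. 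Invoking the first claim together with the continuity of $\partial_s:H^t(\Gamma)\to H^{t-1}(\Gamma)$, this composition maps $H^{s+1/2}(\Gamma)\to H^{s-1/2}(\Gamma)\to H^{s+5/2}(\Gamma)\to H^{s+3/2}(\Gamma)$, and hence is regularizing by one order.

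For the weakly singular term I would write $\kappa_1^2 G_{\kappa_1}-\kappa_2^2 G_{\kappa_2}=\kappa_1^2(G_{\kappa_1}-G_{\kappa_2})+(\kappa_1^2-\kappa_2^2)G_{\kappa_2}$. The first piece inherits the smooth difference kernel and so defines an operator regularizing by three orders, mapping $H^{s+1/2}(\Gamma)\to H^{s+7/2}(\Gamma)$, whereas the second piece is a genuine single-layer-type operator with kernel $G_{\kappa_2}$ and is therefore regularizing by only one order, mapping $H^{s+1/2}(\Gamma)\to H^{s+3/2}(\Gamma)$; this last term is what caps the overall gain at one order. Adding the two contributions yields $N_{\kappa_1}-N_{\kappa_2}:H^{s+1/2}(\Gamma)\to H^{s+3/2}(\Gamma)$, as claimed. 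I expect the main obstacle to lie in the hypersingular term: one must verify carefully that, upon subtraction, the principal-value integral genuinely collapses to the bounded composition $\partial_s(S_{\kappa_1}-S_{\kappa_2})\partial_s$ with no residual singular contribution, while tracking the geometric factors that accompany the parametrization — the arc-length element, the periodized logarithm $\log\bigl(4\sin^2\tfrac{t-\tau}{2}\bigr)$, and the mismatch between the tangent directions at $\mathbf x$ and $\mathbf y$.
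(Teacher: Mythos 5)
Your proof is correct, and it is considerably more self-contained than the paper's, which disposes of the single-layer case by citing Lemma~10 of \cite{lint-br} and of the hypersingular case by a sketch offering two alternatives: either note that the differenced kernel $\partial^2(G_{\kappa_1}-G_{\kappa_2})/\partial n(\mathbf x)\partial n(\mathbf y)$ is weakly singular (since $G_{\kappa_1}-G_{\kappa_2}=\mathcal{O}(r^2\log r)$, its second derivatives carry only a logarithmic singularity), or use the Calder\'on identity $(N_{\kappa_1}-N_{\kappa_2})S_{\kappa_1}=(D_{\kappa_1}^*)^2-N_{\kappa_2}(S_{\kappa_1}-S_{\kappa_2})-(D_{\kappa_2}^*)^2$ together with the single-layer result. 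Your $J_0$-expansion of the Green function difference is exactly the mechanism behind the cited lemma, so that half agrees in substance with the paper; for the hypersingular half you take a third route, via Maue's representation~\eqref{eq:normal_double}, and it works: the chain $H^{s+1/2}(\Gamma)\to H^{s-1/2}(\Gamma)\to H^{s+5/2}(\Gamma)\to H^{s+3/2}(\Gamma)$ for $\partial_s(S_{\kappa_1}-S_{\kappa_2})\partial_s$ is right, as is the splitting $\kappa_1^2G_{\kappa_1}-\kappa_2^2G_{\kappa_2}=\kappa_1^2(G_{\kappa_1}-G_{\kappa_2})+(\kappa_1^2-\kappa_2^2)G_{\kappa_2}$. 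The ``main obstacle'' you flag at the end is in fact no obstacle: for each wavenumber separately the principal-value term in~\eqref{eq:normal_double} \emph{is} $\partial_s S_k[\partial_s\varphi]$ (that is how the PV form is derived from Maue's formula), so the differenced PV term equals $\partial_s(S_{\kappa_1}-S_{\kappa_2})[\partial_s\varphi]$ identically, without any cancellation argument on the differenced kernel; the continuity of $\partial_s(G_{\kappa_1}-G_{\kappa_2})$ merely confirms a posteriori that no principal value survives. One minor imprecision: the $(\kappa_1^2-\kappa_2^2)G_{\kappa_2}$ term is not the sole culprit capping the gain at one order --- the composition $\partial_s(S_{\kappa_1}-S_{\kappa_2})\partial_s$ gains exactly one order as well ($3$ orders of smoothing minus two tangential derivatives), so both contributions saturate the bound. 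As for what each approach buys: yours is elementary and makes the order bookkeeping transparent term by term, at the cost of a kernel computation; the paper's Calder\'on alternative avoids kernels entirely, but to pass from the smoothing of $(N_{\kappa_1}-N_{\kappa_2})S_{\kappa_1}$ to that of $N_{\kappa_1}-N_{\kappa_2}$ it implicitly needs $S_{\kappa_1}$ to admit a parametrix (an order $+1$ left inverse modulo smoothing), a point the paper's sketch leaves unstated.
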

\begin{proof}
  Lemma~10 in~\cite{lint-br} provides a proof for the single layer
  case in a slightly different setting. The corresponding result for
  the hypersingular operator may be established in a similar manner
  (the difference $N_{\kappa_1}-N_{\kappa_2}$ can be expressed as an
  operator with a weakly singular kernel) or, alternatively, from the
  result for $S_{\kappa_1}-S_{\kappa_2}$ together with the relation
\[
(N_{\kappa_1} - N_{\kappa_2})S_{\kappa_1} = (D_{\kappa_1}^*)^2 -N_{\kappa_2}(S_{\kappa_1}-S_{\kappa_2})-(D_{\kappa_2}^*)^2
\]
which follows directly from the Calder\'on relation~\cite{Nedelec}.
\end{proof}
\begin{remark}
  For improved readability in what follows we rely on a slight
  notational abuse: the boundary integral operators associated with
  the wavenumbers $k_1$ and $k_2$ of regions $\Omega_1$ and $\Omega_2$ will be
  denoted by means of a subindex 1 or 2, respectively, i.e., $S_1 =
  S_{k_1}$, $S_2 = S_{k_2}$ and, further, $D_j=D_{k_j}$,
  $D_j^*=D_{k_j}^*$ and $N_j=N_{k_j}$ for $j=1,2$. This notation is
  reserved for the wavenumbers $k_1$ and $k_2$ exclusively, however:
  for other wavenumbers $k = \kappa_1$ and $k = \kappa_1$ to be
  considered the full wavenumber dependence will be displayed as part
  of the notation of the boundary integral operators---e.g. the single
  layer operators with wavenumber $k = \kappa_1$ will be denoted by
  $S_{\kappa_1}$, etc; the notation $S_1$ is reserved for the
  operator $S_{k_1}$.
\end{remark}

As indicated earlier in this section, utilizing the Green
identities~\eqref{Green1} and~\eqref{Green2} to represent the fields
in the regions $\Omega_1$ and $\Omega_2$, respectively, integral
equations for the problem~\eqref{eq:Ac_e}-\eqref{eq:radiation} are
obtained as the transmission conditions~\eqref{eq:bc} are enforced on
the corresponding functions $u^1$ and $u^2$
(cf.~\cite{KittapaKleinman}). Such equations can be obtained by
combining the Green's identities~\eqref{Green1} and~\eqref{Green2}
together with corresponding Green's formulae for the incident fields
(which in view of the assumption~\eqref{eq:Maxwell_inc} satisfy the
Helmholtz equation throughout space), and subsequently evaluating the
interior and exterior boundary values on $\Gamma$ of the fields
$u^j,j=1,2$ and their normal derivatives. Two pairs of integral
equations thus result, one pair arising as limits are taken from the
interior of $\Gamma$, and the other as limits are taken from the
exterior of $\Gamma$. In view of the transmission
conditions~\eqref{eq:bc} these equations may be expressed as integrals
involving the values of the total exterior field $u = u^1+u^{inc}$ and
its normal derivative $\frac{\partial u}{\partial n}$ on $\Gamma$. A
linear combination of the inside and outside integral equations that
express the total field as an integral, together with another linear
combination of the integral equations that express the normal
derivative of the total field as an integral, can then be laid down,
in a search for a well posed, uniquely solvable Fredholm system of two
equations and two unknowns for the transmission problem at
hand. Particular selections of the values of the coefficients of these
linear combinations, for example, yield the following two pairs of
integral equations
\begin{eqnarray}\label{eq:system_trans1}
  u(\mathbf x) + (D_2-D_1)[u](\mathbf x) -(\nu^{-1} S_2 - S_1)\left[\frac{\partial u}{\partial n}\right](\mathbf x)&=&u^{inc}(\mathbf x)\nonumber\\
  \frac{\nu^{-1}+1}{2}\left[\frac{\partial u}{\partial n}\right](\mathbf x) + (D_1^*-\nu^{-1} D^*_2)\left[\frac{\partial u}{\partial n}\right](\mathbf x) -(N_1 - N_2)\left[u\right](\mathbf x)&=&\frac{\partial u^{inc}}{\partial n}(\mathbf x), 
\end{eqnarray}
and
\begin{eqnarray}\label{eq:system_trans}
  \frac{\nu^{-1}+1}{2}u(\mathbf x) + (D_2-\nu^{-1}D_1)[u](\mathbf x) +\nu^{-1} (S_1 - S_2)\left[\frac{\partial u}{\partial n}\right](\mathbf x)&=&\nu^{-1} u^{inc}(\mathbf x)\nonumber\\
  \frac{\nu^{-1}+1}{2}\frac{\partial u}{\partial n}(\mathbf x) + (D_1^*-\nu^{-1} D^*_2)\left[\frac{\partial u}{\partial n}\right](\mathbf x) -(N_1 - N_2)[u](\mathbf x)&=&\frac{\partial u^{inc}}{\partial n}(\mathbf x),
\end{eqnarray}
($\mathbf x\in\Gamma$), each one of which amounts to a well posed
formulation of the transmission
problem~\eqref{eq:Ac_e}-\eqref{eq:radiation}.  These systems of
equations are presented in~\cite{KittapaKleinman}
and~\cite{KleinmanMartin}, respectively. Note that in both of these
systems of equations the hypersingular operators $N_j,j=1,2$ appear in
the form $N_1-N_2$---which can be expressed in terms of a weakly
singular kernel. Thus, given the mapping properties recounted in
Theorems~\ref{regL} and~\ref{thmrev2}, the boundary integral operators
in equations~\eqref{eq:system_trans1} and~\eqref{eq:system_trans} are
either compact operators from $H^p(\Gamma)$ to $H^p(\Gamma)$ ($p\geq
\frac{1}{2}$), or appear as compact differences of operators between
these spaces---and thus equations~\eqref{eq:system_trans1}
and~\eqref{eq:system_trans} are integral equations of the second
kind. The unique solvability of equations~\eqref{eq:system_trans1}
and~\eqref{eq:system_trans} is established in~\cite{KleinmanMartin}.

While the formulations~\eqref{eq:system_trans1}
and~\eqref{eq:system_trans} are identical to each other in the case
$\nu=1$, their properties differ significantly for $\nu\neq 1$. For
instance, we have observed (see Section~\ref{prec}) that for some
cases including high-contrast dielectrics the use of the
formulation~\eqref{eq:system_trans} gives rise to important reductions
in the numbers of GMRES iterations required to reach a given
tolerance, with corresponding reductions in computational times,
vis-a-vis those required by~\eqref{eq:system_trans1}. We attribute
this phenomenon to the fact that the spectrum of the two-by-two matrix
of integral operators on the left-hand-side of~\eqref{eq:system_trans}
accumulates only at the point $\frac{\nu^{-1}+1}{2}$, while the
corresponding spectrum of the matrix operator on the
left-hand-side~\eqref{eq:system_trans1} accumulates at two points,
namely $1$ and $\frac{\nu^{-1}+1}{2}$. The spectrum of the operator
in~\eqref{eq:system_trans1} is therefore likely to be more widely
spread in the complex plane than that of~\eqref{eq:system_trans}, thus
conceivably giving rise to the observed differences in the number of
iterations required by the GMRES solver to reach a given tolerance for
equations~\eqref{eq:system_trans1} and~\eqref{eq:system_trans}.

% WIn the case when $\nu\neq 1$ we will
% refer to the formulation~(\ref{eq:system_trans1}) as CFIE-SKa and to
% the formulation~(\ref{eq:system_trans} as CFIE-SKb; in the case
% $\nu=1$ we refer to both formulations as CFIE-SK.

Yet another linear combination of the pairs of integral equations
arising from exterior and interior limits can be used to effect the
cancellation of the identity terms. This procedure results in the
following first kind integral equations with a positive definite
principal part
\begin{equation}\label{eq:system_trans_FK}
\begin{split}
  -(D_1+D_2)[u](\mathbf x) + (\nu^{-1} S_2 + S_1)\left[\frac{\partial u}{\partial n}\right](\mathbf x)&= u^{inc}(\mathbf x)\\
  -(N_1 +\nu N_2)[u](\mathbf x)+(D^*_1+D^*_2)\left[\frac{\partial
      u}{\partial n}\right](\mathbf x)&=\frac{\partial
    u^{inc}}{\partial n}(\mathbf x),
\end{split}
\end{equation}
($\mathbf x\in\Gamma$).
% and the
% formulations~(\ref{eq:system_trans_FK}) by CFIE-FK.
The unique solvability of the first-kind
system~\eqref{eq:system_trans_FK} $H^{s+1/2}(\Gamma)\times
H^{s-1/2}(\Gamma),\ s\geq 0$ was established
in~\cite{costabel-stephan}.

Indirect versions of the direct second-kind and first-kind equations
discussed above in this section can be obtained by expressing the
fields $u^j,j=1,2$ as linear combinations of single and double layer
potentials and manipulating the resulting interior and exterior
boundary integral equations to parallel the characteristics obtained
for the direct equations discussed above.  The numerical properties of
each resulting indirect integral equation is essentially identical to
that associated with the corresponding direct-equation counterpart.

In the following section we introduce a new methodology for derivation
of well posed {\em regularized} transmission integral equations;
similar ideas were used recently~\cite{bept} in the treatment of the
problem of scattering by perfectly conducting scatterers.  As
demonstrated in Section~\ref{prec}, the new transmission integral
equations enable solution of transmission problems at reduced numbers of GMRES iterations, and correspondingly reduced
computational times, over those resulting from all of the formulations
discussed above in this section.

\section{Regularized integral equations for transmission problems}

\subsection{Regularizing operators I: layer-potential regularization\label{reg-I}}
In this section we derive {\em regularized} integral equations for
transmission problems. To do this we consider acoustic fields $u^1$
and $u^2$ given by
\begin{equation}\label{ansatz1}
  u^1(\mathbf{z})= D_1\left[R_{11}[a]+R_{12}[b]\right](\mathbf{z})-S_1\left[R_{21}[a]+R_{22} [b]\right](\mathbf{z}),\quad \mathbf{z}\in\mathbb{R}^2\setminus\Gamma
\end{equation}
and
\begin{equation}\label{ansatz2}
  u^2(\mathbf{z})= -D_2\left[R_{11} [a]+R_{12} [b]-a\right](\mathbf{z})+\nu^{-1}S_2\left[R_{21} [a]+R_{22} [b]-b\right](\mathbf{z}),\quad \mathbf{z}\in\mathbb{R}^2\setminus\Gamma,
\end{equation}
where $R_{ij}$ ($i=1,2$, $j=1,2$) are regularizing operators (acting
on spaces of functions defined on $\Gamma$) which are to be selected
in what follows. In view of the jump conditions of the various
boundary layer potentials, the fields $u^1$ and $u^2$ solve the
transmission problem~\eqref{eq:Ac_e}--\eqref{eq:radiation} if and only
if the system of integral equations
\begin{equation}\label{eq:reg_diel}
\begin{split}%\label{eq:reg_diel}
  &\left(\frac{I}{2}-D_2+(D_1+D_2)R_{11}-(S_1+\nu^{-1}S_2)R_{21}\right)[a]\\
  &+\left(\nu^{-1}S_2 + (D_1+D_2)R_{12}-(S_1+\nu^{-1}S_2)R_{22}\right)[b]=-u^{inc}|_\Gamma\\
  \\
  &\left(-\nu N_2 +(N_1+\nu N_2)R_{11}-(D^*_1+D^*_2)R_{21}\right)[a]\\
  +&\left(\frac{I}{2}+K^T_2+(N_1+\nu
    N_2)R_{12}-(D^*_1+D^*_2)R_{22}\right) [b]=-(\partial
  u^{inc}/\partial n)|_\Gamma.
\end{split}
\end{equation}
is satisfied.  The system of equations~\eqref{eq:reg_diel} can be
expressed in the form
\begin{equation}\label{mat-eqn}
\mathcal{A}\left(\begin{array}{c} a\\ b\end{array}\right)=-\left(\begin{array}{c}u^{inc}|_\Gamma\\(\partial u^{inc}/\partial n)|_\Gamma\end{array}\right)
\end{equation}
in terms of a certain $2\times 2$ matrix $\mathcal{A}$ with operator
entries.

% We expect that, upon appropriate selection of the regularizing
% operators $R_{ij}$, the density $(a,b)$ is as regular as the right
% hand side, that is $(a,b)\in H^{s}(\Gamma)\times H^{s}(\Gamma),\
% s\geq \frac{1}{2}$ provided $u^{inc}|_\Gamma\in H^s(\Gamma)$ and
% $(\partial u^{inc}/\partial n)|_\Gamma \in H^s(\Gamma)$. (Naturally,
% under the assumption that $\Gamma$ is smooth it follows that that
% both $u^{inc}|_\Gamma\in H^s(\Gamma)$ and $(\partial
% u^{inc}/\partial n)|_\Gamma \in H^s(\Gamma)$ for all
% $s\geq\frac{1}{2}$.)
Our goal is to produce matrix operators $\mathcal{R}=(R_{ij})_{1\leq
  i,j\leq 2}$ such that the matrix operators $\mathcal{A}$ in
equation~\eqref{mat-eqn} is (i)~A compact perturbation of the a
multiple of the identity matrix operator, and (ii)~It is
invertible. In addition, we strive to construct operators
$\mathcal{R}$ that lend themselves to (iii)~Accurate and efficient
numerical implementations, and require iv)~Reduced numbers of
iterations when the system of equations is solved by means of the
iterative linear algebra solver GMRES.

Letting $\mathcal{A}_{ij},\ 1\leq i,j\leq 2$ denote the entries of the
matrix $\mathcal{A}$, we consider at first the operator
$\mathcal{A}_{21}=-\nu N_2 +(N_1+\nu N_2)R_{11}-(D^*_1+D^*_2)R_{21}$,
and we seek to obtain operators $R_{11}$ and $R_{21}$ such that
$\mathcal{A}_{21}$ maps $H^s(\Gamma)$ into $H^{s+1}(\Gamma)$. Given
the smoothing properties of the operators $D_j^*$ ($j=1,2$), we seek
at first to obtain an operator $R_{11}$ for which $-\nu N_2 +(N_1+\nu
N_2)R_{11}$ maps $H^s(\Gamma)$ into $H^{s+1}(\Gamma)$. It is natural
to seek $R_{11}$ of the form $R_{11}=\alpha I$, so that $-\nu N_2
+(N_1+\nu N_2)R_{11}=\alpha N_1-\nu(1-\alpha)N_2$. The latter operator
has the desired property provided that $\alpha=\nu(1-\alpha)$, that is
$\alpha=\frac{\nu}{1+\nu}$. We then seek an operator $R_{21}$ for
which
$\mathcal{A}_{11}=\frac{I}{2}-D_2+(D_1+D_2)R_{11}-(S_1+\nu^{-1}S_2)R_{21}$
is a compact perturbation of the identity operator. Given the
selection made above for the operator $R_{11}$ and the smoothing
properties of the operators $D_j^*$, we need to obtain an operator
$\mathcal{R}_{21}$ such that $(S_1+\nu^{-1}S_2)R_{21}=-\frac{I}{2}+R$
for some continuous operator $R:H^s(\Gamma)\to H^{s+1}(\Gamma)$. It is
natural to seek for operators of the form $R_{21}=\beta N_{\kappa_1},\
\kappa_1\in\mathbb{C}$. (We emphasize here that use of a wavenumber
$\kappa_1$ which is different from that inherent in the given
transmission problem and that, indeed, may be taken to be {\em
  complex}, plays a crucial role to insure invertibility of the
resulting systems of equations; see Section~\ref{invertibility}.) We
have that $\beta(S_1+\nu^{-1}S_2)N_{\kappa_1}=\beta
S_1(N_{\kappa_1}-N_1)+\beta\nu^{-1}S_2(N_{\kappa_1}-N_2)-\frac{\beta(1+\nu^{-1})}{4}I+\beta
D_1^2 +\beta\nu^{-1}D_2^2$, and thus we see that
$\beta=\frac{2\nu}{1+\nu}$ would render operators with the desired
property. We require next that the operator
$\mathcal{A}_{22}=\frac{I}{2}+D^*_2+(N_1+\nu
N_2)R_{12}-(D^*_1+D^*_2)R_{22}$ is a compact perturbation of the
identity operator. Similar considerations lead to the choice
$R_{12}=\gamma S_{\kappa_1}$ where $\gamma=-\frac{2}{1+\nu}$. Given
that the choices presented above lead to $R_{21}=2N_{\kappa_1}R_{11}$,
it is natural to choose $R_{22}=\frac{1}{1+\nu}I$.

In conclusion, the matrix operator $\mathcal{R}$ takes on the form
\begin{eqnarray}\label{eq:tildR1}
R_{11}&=&\frac{\nu}{1+\nu}I\nonumber\\
R_{12}&=&-\frac{2}{1+\nu}S_{\kappa_1}\nonumber\\
R_{21}&=&\frac{2\nu}{1+\nu}N_{\kappa_1}\nonumber\\
R_{22}&=&\frac{1}{1+\nu}I,
\end{eqnarray}
where $\kappa_1$ is a wavenumber such that $\Re(\kappa_1)\geq 0$ and
$\Im(\kappa_1)\geq 0$. The following theorem establishes that the
matrix operator that enters the integral equation
formulation~\eqref{eq:reg_diel} with regularizing operator
$\mathcal{R}$ given in equation~\eqref{eq:tildR1} is a compact
perturbation of the identity matrix.
\begin{theorem}\label{thm2}
  Let $\mathcal{A}$ be the operator in the left-hand-side of
  equation~\eqref{eq:reg_diel}, where $\mathcal{R}$ is given by
  equation~\eqref{eq:tildR1}. Then the operator $\mathcal{A}$ is a
  compact perturbation of the identity matrix in the space
  $H^{s}(\Gamma)\times H^{s}(\Gamma)$ for all $s\geq\frac{1}{2}$.
\end{theorem}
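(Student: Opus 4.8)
The plan is to show that each of the four operator entries $\mathcal{A}_{ij}$ of the matrix $\mathcal{A}$ in~\eqref{eq:reg_diel}, computed with the regularizer $\mathcal{R}$ of~\eqref{eq:tildR1}, is either compact on $H^s(\Gamma)$ or equals a fixed multiple of the identity plus a compact operator; the two diagonal entries should reduce to $I$ plus a compact remainder and the two off-diagonal entries should be compact. Throughout, the compactness criterion I would use is that any operator bounded from $H^s(\Gamma)$ into $H^{s+\epsilon}(\Gamma)$ for some $\epsilon>0$ is compact on $H^s(\Gamma)$, since the Rellich embedding $H^{s+\epsilon}(\Gamma)\hookrightarrow H^s(\Gamma)$ is compact on the closed curve $\Gamma$. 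The hypothesis $s\geq\tfrac12$ is precisely what allows the hypersingular operators to be applied: by Theorem~\ref{regL} the operator $N_k$ maps $H^{s+1/2}\to H^{s-1/2}$ for $s\geq 0$, i.e. it is defined on $H^t(\Gamma)$ only when $t\geq\tfrac12$.

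I would begin with the entry $\mathcal{A}_{21}=-\nu N_2+(N_1+\nu N_2)R_{11}-(D_1^*+D_2^*)R_{21}$, since $\mathcal{R}$ was built precisely to make it smoothing. Substituting $R_{11}=\tfrac{\nu}{1+\nu}I$, a direct algebraic simplification collapses the first two terms to $\tfrac{\nu}{1+\nu}(N_1-N_2)$, which gains one order by Theorem~\ref{thmrev2}, while the last term $-\tfrac{2\nu}{1+\nu}(D_1^*+D_2^*)N_{\kappa_1}$ is a composition of operators smoothing by three orders (Theorem~\ref{regL}) with one of order $+1$, hence smoothing by two orders; thus $\mathcal{A}_{21}:H^s\to H^{s+1}$ and is compact. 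For the diagonal entry $\mathcal{A}_{11}$ the terms $-D_2+(D_1+D_2)R_{11}$ are compact (each $D_j$ smooths by three orders), so the delicate piece is $-(S_1+\nu^{-1}S_2)R_{21}=-\tfrac{2\nu}{1+\nu}(S_1+\nu^{-1}S_2)N_{\kappa_1}$. Here I would write $N_{\kappa_1}=(N_{\kappa_1}-N_j)+N_j$ and invoke the Calder\'on relation $S_jN_j=D_j^2-\tfrac14 I$~\cite{Nedelec}; the cross terms $S_j(N_{\kappa_1}-N_j)$ are smoothing by two orders (Theorem~\ref{regL} combined with Theorem~\ref{thmrev2}) and the $D_j^2$ terms smooth by six orders, leaving the non-compact contribution $-\tfrac{2\nu}{1+\nu}\bigl(-\tfrac{1+\nu^{-1}}{4}\bigr)I=\tfrac12 I$. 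Combined with the explicit $\tfrac{I}{2}$ already present in $\mathcal{A}_{11}$, this yields $\mathcal{A}_{11}=I+\text{compact}$.

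The entries $\mathcal{A}_{22}$ and $\mathcal{A}_{12}$ I would treat by the symmetric argument, now using the adjoint Calder\'on relation $N_jS_j=(D_j^*)^2-\tfrac14 I$. For $\mathcal{A}_{22}=\tfrac{I}{2}+D_2^*+(N_1+\nu N_2)R_{12}-(D_1^*+D_2^*)R_{22}$, the terms $D_2^*$ and $-(D_1^*+D_2^*)R_{22}$ are compact, while $(N_1+\nu N_2)R_{12}=-\tfrac{2}{1+\nu}(N_1+\nu N_2)S_{\kappa_1}$ is analyzed by writing $S_{\kappa_1}=(S_{\kappa_1}-S_j)+S_j$: the differences $S_{\kappa_1}-S_j$ smooth by three orders (Theorem~\ref{thmrev2}), so the cross terms $N_j(S_{\kappa_1}-S_j)$ are smoothing by two orders, and the Calder\'on identity extracts the non-compact part $-\tfrac{2}{1+\nu}\bigl(-\tfrac{1+\nu}{4}\bigr)I=\tfrac12 I$, giving $\mathcal{A}_{22}=I+\text{compact}$. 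Finally $\mathcal{A}_{12}=\nu^{-1}S_2+(D_1+D_2)R_{12}-(S_1+\nu^{-1}S_2)R_{22}$ is compact term by term, since each of $\nu^{-1}S_2$ and $(S_1+\nu^{-1}S_2)R_{22}$ gains one order while $(D_1+D_2)R_{12}$ gains four. Assembling the four estimates, $\mathcal{A}$ equals the $2\times 2$ identity matrix plus an operator that is compact on $H^s(\Gamma)\times H^s(\Gamma)$.

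The main obstacle, and the crux of the argument, is the careful accounting of pseudodifferential orders in the products $S_jN_{\kappa_1}$ and $N_jS_{\kappa_1}$: naively each is only order $0$ (the product of orders $-1$ and $+1$), so compactness cannot be read off directly. Two mechanisms save the argument — the Calder\'on relations, which isolate the order-$0$ part as an explicit multiple of the identity, and Theorem~\ref{thmrev2}, whose two-order gain for the wavenumber differences $N_{\kappa_1}-N_j$ and $S_{\kappa_1}-S_j$ renders the remaining cross terms genuinely smoothing. Verifying that the resulting identity coefficients equal exactly $\tfrac12$ in each diagonal entry, so that they combine with the explicit $\tfrac12$ to produce a full $I$, is the one step in which the specific values of the constants in~\eqref{eq:tildR1} are indispensable, and is the computation I would carry out most carefully.
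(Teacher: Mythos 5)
Your proof is correct, and it follows the same overall strategy as the paper---entrywise analysis of $\mathcal{A}$, Rellich compactness via a gain of Sobolev order, the smoothing of wavenumber differences from Theorem~\ref{thmrev2}, and Calder\'on identities to extract the $\tfrac12 I$ contributions on the diagonal---but the details of your decomposition differ in an instructive way. In the diagonal entries the paper splits the \emph{physical} operators around the regularizing wavenumber, writing $S_j=(S_j-S_{\kappa_1})+S_{\kappa_1}$ (resp.\ $N_j=(N_j-N_{\kappa_1})+N_{\kappa_1}$) and then applying a single Calder\'on identity at $\kappa_1$, e.g.\ $-2S_{\kappa_1}N_{\kappa_1}=\tfrac{I}{2}-2(D_{\kappa_1}^{*})^2$; you instead split the \emph{regularizer}, $N_{\kappa_1}=(N_{\kappa_1}-N_j)+N_j$ and $S_{\kappa_1}=(S_{\kappa_1}-S_j)+S_j$, and apply the Calder\'on relations $S_jN_j=D_j^2-\tfrac14 I$ and $N_jS_j=(D_j^{*})^2-\tfrac14 I$ at each physical wavenumber---which is in fact the decomposition the paper itself uses in the heuristic derivation of~\eqref{eq:tildR1}, just not in the proof of Theorem~\ref{thm2}. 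The two routes produce the same constants (your verification that the coefficient is exactly $\tfrac12$ matches the paper's), and the order bookkeeping is equivalent since both hinge on Theorems~\ref{regL} and~\ref{thmrev2}. Your treatment of the off-diagonal entries is actually more economical than the paper's: for $\mathcal{A}_{21}$ you observe directly that $(D_1^*+D_2^*)N_{\kappa_1}$ smooths by two orders, where the paper inserts $2D_{\kappa_1}^*$ and a term $N_{\kappa_1}D_{\kappa_1}$ before estimating; and for $\mathcal{A}_{12}$ you note term-by-term compactness (each single-layer term already gains one order), where the paper exploits the algebraic cancellation $\tfrac{1}{1+\nu}(S_2-S_1)$ to gain an extra order---a refinement that matters for the sharper mapping statement $\mathcal{A}_{12}:H^s\to H^{s+1}$ but is not needed for compactness alone. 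One cosmetic caveat: state the Calder\'on relations with a fixed sign convention as you did ($S_kN_k=D_k^2-\tfrac14 I$, $N_kS_k=(D_k^{*})^2-\tfrac14 I$), since mixing $D$ and $D^*$ across the two identities is a common slip (the paper's proof itself writes $(D_{\kappa_1}^{*})^2$ where its own derivation section has $D_j^2$); this has no effect on the argument because both compositions smooth by six orders.
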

\begin{proof} For the $1,1$ component of the matrix operator
  $\mathcal{A}$, which is given by
\begin{eqnarray}
\mathcal{A}_{11}&=&\frac{I}{2}-\frac{2\nu}{1+\nu}(S_1+\nu^{-1}S_2)N_{\kappa_1}-D_2+\frac{\nu}{1+\nu}(D_1+D_2)\nonumber\\
&=&\frac{I}{2}-2S_{\kappa_1} N_{\kappa_1}-\frac{2\nu}{1+\nu}[(S_1-S_{\kappa_1})+\nu^{-1}(S_2-S_{\kappa_1})]N_{\kappa_1}\nonumber\\
&-&D_2+\frac{\nu}{1+\nu}(D_1+D_2),
\end{eqnarray}
we have $\mathcal{A}_{11}=I+\mathcal{A}_{11}^r$ where
\begin{eqnarray}
\mathcal{A}_{11}^r&=&-2(D_{\kappa_1}^*)^2-\frac{2\nu}{1+\nu}[(S_1-S_{\kappa_1})+\nu^{-1}(S_2-S_{\kappa_1})]N_{\kappa_1}\nonumber\\
&-&D_2+\frac{\nu}{1+\nu}(D_1+D_2).\nonumber
\end{eqnarray}
Given that $D_j:H^{s}(\Gamma)\to H^{s+3}(\Gamma)$,
$D_{\kappa_1}^*:H^{s}(\Gamma)\to H^{s+3}(\Gamma)$,
$N_{\kappa_1}:H^{s}(\Gamma)\to H^{s-1}(\Gamma)$,
$S_j-S_{\kappa_1}:H^{s-1}(\Gamma)\to H^{s+2}(\Gamma)$ for $j=1,2$, we
see that $\mathcal{A}_{11}^r:H^{s}(\Gamma)\to H^{s+2}(\Gamma)$.

The $1,2$ entry of the matrix operator $\mathcal{A}$ is given by
\begin{eqnarray}
  \mathcal{A}_{12}&=&\nu^{-1}S_2-\frac{2}{1+\nu}(D_1+D_2)S_{\kappa_1}-\frac{1}{1+\nu}(S_1+\nu^{-1}S_2)\nonumber\\
  &=&\frac{1}{1+\nu}(S_2-S_1)-\frac{2}{1+\nu}(D_1+D_2)S_{\kappa_1};
\end{eqnarray}
which, as it is checked easily, maps $H^{s}(\Gamma)$ into
$H^{s+1}(\Gamma)$.

The $2,1$ entry of $\mathcal{A}$ is given by
\begin{eqnarray}
\mathcal{A}_{21}&=&-\nu N_2 +\frac{\nu}{1+\nu}(N_1+\nu N_2)-\frac{2\nu}{1+\nu}(D_1^*+D_2^*)N_{\kappa_1}\nonumber\\
&=&\frac{\nu}{1+\nu}(N_1-N_2)-\frac{2\nu}{1+\nu}(D_1^*+D_2^*-2D_{\kappa_1}^*)N_{\kappa_1}\nonumber\\
&-&\frac{4\nu}{1+\nu}N_{\kappa_1}D_{\kappa_1}.
\end{eqnarray}
Given that $N_1-N_2:H^{s}(\Gamma)\to H^{s+1}(\Gamma)$, $D_j^*-D_{\kappa_1}^*:H^{s}(\Gamma)\to H^{s+3}(\Gamma)$, $N_{\kappa_1}:H^{s}(\Gamma)\to H^{s-1}(\Gamma)$, $D_{\kappa_1}:H^{s}(\Gamma)\to H^{s+3}(\Gamma)$ for $j=1,2$, we see that $\mathcal{A}_{21}:H^{s}(\Gamma)\to H^{s+1}(\Gamma)$. 

Finally, the $2,2$ entry of $\mathcal{A}$, which is given by
\begin{eqnarray}
  \mathcal{A}_{22}&=&\frac{I}{2}-\frac{2}{1+\nu}(N_1+\nu N_2)S_{\kappa_1} +D_2^*-\frac{1}{1+\nu}(D_1^*+D_2^*)\nonumber\\
  &=&\frac{I}{2}-2N_{\kappa_1} S_{\kappa_1} -\frac{2}{1+\nu}[(N_1-N_{\kappa_1})+\nu (N_2-N_{\kappa_1})]S_{\kappa_1} \nonumber\\
  &+&D_2^*-\frac{1}{1+\nu}(D_1^*+D_2^*)\nonumber
\end{eqnarray}
can be expressed in the form $\mathcal{A}_{22} = I +
\mathcal{A}_{22}^r$, where
\begin{eqnarray}
  \mathcal{A}_{22}^r&=&-2D_{\kappa_1}^2-\frac{2}{1+\nu}[(N_1-N_{\kappa_1})+\nu (N_2-N_{\kappa_1})]S_{\kappa_1} +D_2^*-\frac{1}{1+\nu}(D_1^*+D_2^*).
\end{eqnarray}
Given that $N_j-N_{\kappa_1}:H^{s}(\Gamma)\to H^{s+1}(\Gamma)$, the
mapping properties of the single and double layer potentials tell us
that $\mathcal{A}_{22}^r$ maps $H^{s}(\Gamma)$ continuously into
$H^{s+2}(\Gamma)$, and the proof is complete.
\end{proof}

\subsection{Invertibility of the integral formulation~\eqref{eq:reg_diel}
  with $\mathcal{R}$ given by~\eqref{eq:tildR1}\label{invertibility}}
Having established that the operator $\mathcal{A}$ equals the sum of
the identity operator and a compact operator we turn next to its
invertibility, or what is equivalent (in view of the Fredholm theory),
to its injectivity. The freedom to select {\it complex} values of the
regularization wavenumber $\kappa_1$ plays a fundamental role in these
regards.
\begin{theorem}\label{thm3}
  Let $R_{11}$ and $R_{22}$ be given as in equation~\eqref{eq:tildR1},
  and let $R_{12}$ and $R_{21}$ satisfy the positivity relations
\begin{equation}\label{positive1}
 - \Im \int_\Gamma (R_{12} [b])\ \overline{b}\ ds > 0\ \mbox{for all}\ b\in H^{-1/2}(\Gamma)\ b\ne 0
\end{equation}
\begin{equation}\label{positive2}
  \Im \int_\Gamma (R_{21} [a])\ \overline{a}\ ds > 0\ \mbox{for all}\ a\in H^{1/2}(\Gamma)\ a\ne 0.
\end{equation}
Then the operator $\mathcal{A}:H^{s}(\Gamma)\times H^{s}(\Gamma)\to
H^{s}(\Gamma)\times H^{s}(\Gamma)$ is injective.
\end{theorem}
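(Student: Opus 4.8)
The plan is to establish injectivity by reducing a kernel element to the homogeneous transmission problem and then running a Green's-identity (energy) argument on the layer potentials of \eqref{ansatz1}--\eqref{ansatz2}, extended across $\Gamma$ to the complementary domains.

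First I would suppose $(a,b)^{T}$ lies in $\ker\mathcal{A}$, so the right-hand side of \eqref{eq:reg_diel} vanishes. Writing $\varphi=R_{11}[a]+R_{12}[b]$ and $\psi=R_{21}[a]+R_{22}[b]$, the fields $u^{1},u^{2}$ of \eqref{ansatz1}--\eqref{ansatz2} are superpositions of layer potentials and therefore automatically solve \eqref{eq:Ac_e}, \eqref{eq:Ac_i} and the radiation condition \eqref{eq:radiation}; the vanishing of \eqref{eq:reg_diel} says precisely that they satisfy the homogeneous transmission conditions \eqref{eq:bc} with $u^{inc}=0$. By the assumed uniqueness of the transmission problem this forces $u^{1}\equiv 0$ in $\Omega_{1}$ and $u^{2}\equiv 0$ in $\Omega_{2}$. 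Since $s\geq \tfrac12$ gives $a\in H^{s}\subseteq H^{1/2}(\Gamma)$ and $b\in H^{s}\subseteq H^{-1/2}(\Gamma)$, the hypotheses \eqref{positive1}--\eqref{positive2} will be applicable to the densities at the end.

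Next I would introduce the \emph{complementary} fields: let $\tilde u^{1}:=D_{1}[\varphi]-S_{1}[\psi]$ restricted to $\Omega_{2}$, and $\tilde u^{2}:=-D_{2}[\varphi-a]+\nu^{-1}S_{2}[\psi-b]$ restricted to $\Omega_{1}$. Using the standard single- and double-layer jump relations together with the vanishing of the Cauchy data of $u^{1}$ from $\Omega_{1}$ and of $u^{2}$ from $\Omega_{2}$, the traces of $\tilde u^{1}$ and $\tilde u^{2}$ on $\Gamma$ can be written purely in terms of the densities: the Cauchy data of $\tilde u^{1}$ is (up to the global sign fixed by the normalization of $G_{k}$) $(\varphi,\psi)$, and that of $\tilde u^{2}$ is $(a-\varphi,\ \nu^{-1}(b-\psi))$. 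Here $\tilde u^{1}$ solves Helmholtz with wavenumber $k_{1}$ in the bounded region $\Omega_{2}$, while $\tilde u^{2}$ solves Helmholtz with wavenumber $k_{2}$ in the unbounded $\Omega_{1}$ and, being a superposition of radiating potentials, satisfies the radiation condition.

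I would then apply Green's first identity to $\tilde u^{1}$ over $\Omega_{2}$ and to $\tilde u^{2}$ over $\Omega_{1}$ (on $\Omega_{1}\cap B_{R}$, $R\to\infty$) and take imaginary parts; assuming first that $k_{1},k_{2}$ are real, the volume terms are real and drop out, the interior identity gives $\Im\int_{\Gamma}\overline{\varphi}\,\psi\,ds=0$, and the exterior identity couples the boundary pairing of the Cauchy data of $\tilde u^{2}$ to a nonnegative radiation term $C\geq 0$. Adding the two identities and substituting $\varphi=\tfrac{\nu}{1+\nu}a+R_{12}[b]$ and $\psi=R_{21}[a]+\tfrac{1}{1+\nu}b$, the cross term proportional to $\Im\int_{\Gamma}\overline{a}\,b\,ds$ appears with coefficient $1-\tfrac{1}{1+\nu}-\tfrac{\nu}{1+\nu}=0$ and cancels---this cancellation being exactly why the hypotheses fix $R_{11}=\tfrac{\nu}{1+\nu}I$ and $R_{22}=\tfrac{1}{1+\nu}I$. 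What survives is an identity of the form
\[
\Im\int_{\Gamma}(R_{21}[a])\,\overline{a}\,ds\;-\;\Im\int_{\Gamma}(R_{12}[b])\,\overline{b}\,ds\;+\;C\;=\;0,\qquad C\geq 0 .
\]
By \eqref{positive2} the first term is strictly positive unless $a=0$, by \eqref{positive1} the second term (entering with its minus sign) is strictly positive unless $b=0$, and $C\geq 0$; a sum of three nonnegative quantities vanishing forces each to vanish, whence $a=b=0$. This proves $\ker\mathcal{A}=\{0\}$, and since Theorem~\ref{thm2} exhibits $\mathcal{A}$ as identity plus compact, injectivity yields invertibility. The main obstacle is the sign bookkeeping in the energy step: one must track the orientation of $n$, the jump relations, the normalization of $G_{k}$, and the radiation term so that the three surviving contributions carry a common sign matching the normalization of \eqref{positive1}--\eqref{positive2}; the extension to complex $k_{1},k_{2}$ (allowed under the standing uniqueness assumption) only adds further nonnegative volume terms and does not alter the conclusion.
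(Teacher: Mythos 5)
Your proposal is correct and follows essentially the same route as the paper's own proof: a kernel element yields the homogeneous transmission problem, uniqueness gives $u^1\equiv 0$ in $\Omega_1$ and $u^2\equiv 0$ in $\Omega_2$, the complementary traces computed via the jump relations reproduce exactly the paper's formulas~\eqref{jump_final}, the cross term $\Im\int_\Gamma a\,\overline{b}\,ds$ cancels precisely because $R_{11}=\frac{\nu}{1+\nu}I$ and $R_{22}=\frac{1}{1+\nu}I$, and the positivity hypotheses \eqref{positive1}--\eqref{positive2} force $a=b=0$. The only cosmetic difference is that where the paper invokes the classical lemma for radiating solutions (Colton--Kress) to conclude $u^2\equiv 0$ in $\Omega_1$ from $\Im\int_\Gamma u^2_+\,\overline{\partial u^2_+/\partial n}\,ds\geq 0$ before extracting the vanishing of the two positive terms, you inline that lemma's proof as the nonnegative radiation term $C$ and conclude from ``three nonnegative terms summing to zero''---the same energy argument, with the same sign bookkeeping, reaching the same conclusion.
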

\begin{proof}
  Let $(a,b)$ be a solution of equations~\eqref{eq:reg_diel} with zero
  right-hand side and let $u^1$ and $u^2$ be given by
  equations~\eqref{ansatz1} and~\eqref{ansatz2}.  In view of classical
  uniqueness results for transmission problems we see that $u_1=0$ in
  $ \Omega_1$ and $u_2=0$ in $\Omega_2$. It follows that
\begin{equation}\label{zero_lim}
  u_+^1=\partial u_+^1/\partial n=0\quad \mbox{and}\quad u_-^2=\partial
  u_-^2/\partial n=0\quad \mbox{on}\quad \Gamma, 
\end{equation}
where the subscript $+$ (resp. the subscript $-$) denotes boundary
values that are obtained as $\Gamma$ is approached from the exterior
domain $\Omega_1$ (resp. from the interior domain $\Omega_2$).  Noting
that equations~\eqref{ansatz1} and~\eqref{ansatz2} define $u^1$ and
$u^2$ everywhere outside $\Gamma$, we may also evaluate the limits of
$u^1$ (resp. $u^2$) as $\Gamma$ is approached from the interior domain
$\Omega_2$ (resp. the exterior domain $\Omega_1$). Using the
relations~\eqref{zero_lim} together with the well known jump formulas
for the various layer potentials we thus obtain
\begin{eqnarray}\label{jump_final}
  u_{-}^1=-R_{11}[a]-R_{12} [b],&&\partial u_{-}^1/\partial n=-R_{21} [a]-R_{22} [b]\nonumber\\
  u_{+}^2=a-R_{11} [a]-R_{12} [b],&&\partial u_{+}^2/\partial n =\nu^{-1}(b-R_{21}[a]-R_{22} [b]),
\end{eqnarray}
and, therefore,
\begin{eqnarray}
\nu \int_\Gamma u_{+}^2\ \overline{\partial u_+^2/\partial n}\ ds&=&\int_\Gamma a\ \overline{b}\ ds -\int_\Gamma a\ \overline{R_{21}[a]}\ ds - \int_\Gamma a\ \overline{R_{22}[b]}\ ds\nonumber\\
&-&\int_\Gamma (R_{11}[a])\ \overline{b}\ ds - \int_\Gamma (R_{12}[b])\ \overline{b}\ ds + \int_\Gamma u_{-}^1\ \overline{\partial u_{-}^1/\partial n}\ ds.\nonumber
\end{eqnarray}
Using the definition of the operators $R_{11}$ and $R_{22}$ given in equation~\eqref{eq:tildR1} we then obtain
\begin{eqnarray}
\nu \int_\Gamma u_{+}^2\ \overline{\partial u_+^2/\partial n}\ ds&=&-\int_\Gamma a\ \overline{R_{21}[a]}\ ds - \int_\Gamma (R_{12}[b])\ \overline{b}\ ds\nonumber\\
&+&\int_\Gamma u_{-}^1\ \overline{\partial u_{-}^1/\partial n}\ ds\nonumber
\end{eqnarray}
which in view of the Green identity can be also expressed in the form
\begin{eqnarray}
\nu \int_\Gamma u_{+}^2\ \overline{\partial u_+^2/\partial n}\ ds&=&-\int_\Gamma a\ \overline{R_{21}[a]}\ ds - \int_\Gamma (R_{12}[b])\ \overline{b}\ ds\nonumber\\
&+&\int_{\Omega_2}(-k_1^2|u^1|^2+|\nabla u^1|^2)dx.\nonumber
\end{eqnarray}
Taking the imaginary part of this equation we obtain
\begin{equation}\label{eq:uniq1}
\nu\ \Im \int_\Gamma u_{+}^2\ \overline{\partial u_+^2/\partial n}\ ds=\Im \int_\Gamma (R_{21}[a])\ \overline{a}\ ds -\Im \int_\Gamma (R_{12}[b])\ \overline{b}\ ds
\end{equation}
and, thus, in view of equations~\eqref{positive1}
and~\eqref{positive2}
\begin{equation}\label{eq:final}
\Im \int_\Gamma u_{+}^2\ \overline{\partial u_+^2/\partial n}\ ds\geq 0.
\end{equation} 
Since $u^2$ is a radiative solution of the Helmholtz equation with
wavenumber $k_2$ in the domain $\Omega_1$ it
follows~\cite[p. 78]{KressColton} that $u^2=0$ in $\Omega_1$, and,
thus, $\Im \int_\Gamma (R_{21} [a])\ \overline{a}\ ds=0$ and $\Im
\int_\Gamma (R_{12} b)\ \overline{b}\ ds=0$. Invoking once again
equations~\eqref{positive1} and~\eqref{positive2}, finally, we obtain
$a=b=0$ on $\Gamma$. It follows that the operator $\mathcal{A}$ is
injective, as claimed.
\end{proof}

\begin{theorem}

 Let $\mathcal{R}$ be defined as in equations~\eqref{eq:tildR1} and let
  $\kappa_1$ be given by
\begin{equation}\label{kappa1}
  \kappa_1=\kappa+i\varepsilon,\qquad
  \mbox{with}\quad\kappa>0\quad\mbox{and}\quad \varepsilon>0.
\end{equation}
Then the operator $\mathcal{A}:H^{s}(\Gamma)\times H^{s}(\Gamma)\to
H^{s}(\Gamma)\times H^{s}(\Gamma)$ is invertible for all
$s\geq\frac{1}{2}$.
\end{theorem}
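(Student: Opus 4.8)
The plan is to combine the two preceding theorems through the Fredholm alternative. By Theorem~\ref{thm2} the operator $\mathcal{A}$ equals the identity plus a compact operator on $H^{s}(\Gamma)\times H^{s}(\Gamma)$ for every $s\geq\frac{1}{2}$; hence invertibility is equivalent to injectivity, and it suffices to check the hypotheses of Theorem~\ref{thm3}. Since $R_{11}$ and $R_{22}$ already have the form required in~\eqref{eq:tildR1}, the only thing left to establish is that the specific choices $R_{12}=-\frac{2}{1+\nu}S_{\kappa_1}$ and $R_{21}=\frac{2\nu}{1+\nu}N_{\kappa_1}$ satisfy the positivity relations~\eqref{positive1} and~\eqref{positive2} when $\kappa_1=\kappa+i\varepsilon$ with $\kappa,\varepsilon>0$ (note that the constants $\frac{2}{1+\nu}$ and $\frac{2\nu}{1+\nu}$ are positive since $\nu>0$). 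Once this is done, Theorem~\ref{thm3} yields injectivity of $\mathcal{A}$ on $H^{s}(\Gamma)\times H^{s}(\Gamma)$ for all $s\geq\frac{1}{2}$, and combined with the Fredholm reduction this gives invertibility for all such $s$.

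The heart of the argument is an energy identity for layer potentials with a complex wavenumber. First I would verify~\eqref{positive1}. Given $b\in H^{-1/2}(\Gamma)$, set $v=S_{\kappa_1}[b]$, so that $\Delta v+\kappa_1^2 v=0$ in $\mathbb{R}^2\setminus\Gamma$; because $\Im \kappa_1=\varepsilon>0$ the potential $v$ decays exponentially as $|\mathbf{x}|\to\infty$, so all volume integrals below converge and the contribution of the circle at infinity vanishes. Applying Green's first identity separately in $\Omega_2$ and in $\Omega_1$, using the continuity of $v$ across $\Gamma$ and the jump relation $\partial v_+/\partial n-\partial v_-/\partial n=b$, and adding the two contributions gives
\begin{equation*}
\int_{\mathbb{R}^2}\left(|\nabla v|^2-\kappa_1^2|v|^2\right)dx=\overline{\int_\Gamma S_{\kappa_1}[b]\ \overline{b}\ ds}.
\end{equation*}
Taking imaginary parts, using that $|\nabla v|^2$ is real and that $\Im(\kappa_1^2)=2\kappa\varepsilon$, one obtains $\Im\int_\Gamma S_{\kappa_1}[b]\ \overline{b}\ ds=2\kappa\varepsilon\int_{\mathbb{R}^2}|v|^2\,dx$, which is strictly positive whenever $b\neq 0$: since $b$ is recovered from the jump of $\partial v/\partial n$, the potential $v$ cannot vanish identically. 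Multiplying by the positive constant $\frac{2}{1+\nu}$ yields $-\Im\int_\Gamma (R_{12}[b])\ \overline{b}\ ds>0$, which is exactly~\eqref{positive1}.

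The relation~\eqref{positive2} follows from the analogous computation for the double layer. For $a\in H^{1/2}(\Gamma)$ set $w=D_{\kappa_1}[a]$; then $\partial w/\partial n$ is continuous across $\Gamma$ with common value $N_{\kappa_1}[a]$, while the trace jumps as $w_+-w_-=-a$, and again $w$ decays exponentially at infinity. The same two-domain Green identity now gives
\begin{equation*}
\int_{\mathbb{R}^2}\left(|\nabla w|^2-\kappa_1^2|w|^2\right)dx=-\int_\Gamma N_{\kappa_1}[a]\ \overline{a}\ ds,
\end{equation*}
whence $\Im\int_\Gamma N_{\kappa_1}[a]\ \overline{a}\ ds=2\kappa\varepsilon\int_{\mathbb{R}^2}|w|^2\,dx>0$ for $a\neq 0$, and multiplication by $\frac{2\nu}{1+\nu}>0$ gives~\eqref{positive2}. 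The only delicate points are the bookkeeping of the correct signs in the jump relations for the chosen orientation of $n$ (which points into $\Omega_2$) and the justification of the vanishing of the boundary term at infinity; both are controlled precisely by the hypotheses $\kappa>0$, $\varepsilon>0$, which guarantee $\Im(\kappa_1^2)>0$ and exponential decay of the potentials, so the construction genuinely requires a \emph{complex} regularization wavenumber. With~\eqref{positive1} and~\eqref{positive2} in hand, Theorem~\ref{thm3} applies and the invertibility claim follows.
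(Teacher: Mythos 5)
Your proposal is correct and takes essentially the same route as the paper: the paper likewise combines Theorem~\ref{thm2} (compact perturbation of the identity, hence Fredholm) with Theorem~\ref{thm3} (injectivity), verifying the positivity hypotheses~\eqref{positive1} and~\eqref{positive2} for $R_{12}=-\frac{2}{1+\nu}S_{\kappa_1}$ and $R_{21}=\frac{2\nu}{1+\nu}N_{\kappa_1}$ via precisely the complex-wavenumber energy identities you derive inline, which appear in the paper as Lemmas~\ref{sixp1} and~\ref{sixp2} of Appendix~\ref{app1}. Your sign bookkeeping for the jump relations and your use of the exponential decay (from $\varepsilon>0$) to eliminate the boundary term at infinity, as well as the positivity of the constants for $\nu>0$, all match the paper's argument, so nothing is missing.
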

\begin{proof}
  Given the definitions~\eqref{eq:tildR1} of the operators $R_{12}$
  and $R_{21}$ and the results in Appendix~\ref{app1}
  (Lemma~\ref{sixp1} and Lemma~\ref{sixp2}), we see that the operators
  $R_{12}$ and $R_{21}$ satisfy the positivity
  relations~\eqref{positive1} and~\eqref{positive2} respectively. It
  therefore follows from Theorem~\ref{thm3} that the operator
  $\mathcal{A}$ is injective. In view of Theorem~\ref{thm2} the
  claimed invertibility follows from the Fredholm theory and the
  proof is thus complete.
\end{proof}

\subsection{Regularizing operators II: Principal symbol
  regularization\label{reg-II}} In this section we introduce an
alternative choice of regularizing operators $\mathcal{R}$ that
results as the operators in equation~\eqref{eq:tildR1} are replaced by
operators that, for a given parametrization of the underlying curve
$\Gamma$, have the same leading order asymptotics in Fourier space. In
what follows we assume, without loss of generality, that $\Gamma$ is
represented by means of a smooth and $2\pi$ periodic parametrization
$x:[0,2\pi]\to\Gamma$.

For a given $\kappa>0$ and $\varepsilon>0$ let us define the following
functions
\begin{equation}\label{eq:PS}
  p^N(\xi;\kappa+i\varepsilon)=-\frac{1}{2}\sqrt{|\xi|^2-(\kappa+i\varepsilon)^2}\qquad p^S(\xi;\kappa+i\varepsilon)=\frac{1}{2\sqrt{|\xi|^2-(\kappa+i\varepsilon)^2}}
\end{equation}
for real values of the argument $\xi$. The square root featured in
equation~\eqref{eq:PS} is defined so that
$\Im(p^N(\xi;\kappa+i\varepsilon)) >0$ and
$\Im(p^S(\xi;\kappa+i\varepsilon))>0$ for all $\xi$. Using these
functions together with the given parametrization $x =x(t)$ of the
curve $\Gamma$ we define (cf.~\cite{Saranen}) operators
$\sigma_{\kappa+i\varepsilon}^{N,x}$ and
$\sigma_{\kappa+i\varepsilon}^{S,x}$ that are Fourier multipliers with
symbols $p^N(\xi;\kappa+i\varepsilon)$ and
$p^S(\xi;\kappa+i\varepsilon)$ respectively: given elements $\phi\in
H^{s+1/2}(\Gamma)$ and $\psi\in H^{s-1/2}(\Gamma)$ and letting
$\hat{\phi}(n)$ and $\hat{\psi}(n)$ denote the Fourier coefficients of
the functions $\phi(x(t))$ and $\psi(x(t))|x'(t)|$, respectively, we
define
\begin{equation}\label{eq:defPS1}
\sigma_{\kappa+i\varepsilon}^{N,x}[\phi](x(t))=\frac{1}{|x'(t)|}\sum_{n\in\mathbb{Z}}p^N(n;\kappa+i\varepsilon)\hat{\phi}(n)e^{int}
\end{equation}
and
\begin{equation}\label{eq:defPS2}
  \sigma_{\kappa+i\varepsilon}^{S,x}[\psi](x(t))=\sum_{n\in\mathbb{Z}}p^S(n;\kappa+i\varepsilon)(n)\hat{\psi}(n)e^{int}.
\end{equation}
It is easy to check that, denoting by $H^{s}[0,2\pi]$ the order-$s$
Sobolev space of $2\pi$-periodic functions in the real
line~\cite{Kress}, $\sigma_{\kappa+i\varepsilon}^{N,x}$ and
$\sigma_{\kappa+i\varepsilon}^{S,x}$ define continuous maps
$\sigma_{\kappa+i\varepsilon}^{N,x}:H^{r+1/2}[0,2\pi]\to
H^{r-1/2}[0,2\pi]$ and
$\sigma_{\kappa+i\varepsilon}^{S,x}:H^{r-1/2}[0,2\pi]\to
H^{r+1/2}[0,2\pi]$. Further, these operators possess the following
positivity relations
\begin{equation}\label{posN}
\Im\ \int_0^{2\pi} \sigma_{\kappa+i\varepsilon}^{N,x}[\phi](x(t))\ \overline{\phi}(x(t))\ |x'(t)|dt >0\quad  \mbox{for every}\quad  \phi\in H^{1/2}[0,2\pi],\ \phi\neq 0
\end{equation}
and
\begin{equation}\label{posS}
  \Im\ \int_0^{2\pi} \sigma_{\kappa+i\varepsilon}^{S,x}[\psi](x(t))\ \overline{\psi}(x(t))\ |x'(t)|dt >0\ \mbox{for every}\quad \psi\in H^{-1/2}[0,2\pi],\quad \phi\neq 0
\end{equation}
which follow from the formulae
\[
\Im\ \int_0^{2\pi} \sigma_{\kappa+i\varepsilon}^{N,x}[\phi](x(t)) \overline{\phi}(x(t))\ |x'(t)|dt=\sum_{n\in\mathbb{Z}}\Im(p^N(n;\kappa+i\varepsilon))|\hat{\phi}(n)|^2
\]
and
\[
\Im\ \int_0^{2\pi} \sigma_{\kappa+i\varepsilon}^{S,x}[\psi](x(t)) \overline{\psi}(x(t))\ |x'(t)|dt=\sum_{n\in\mathbb{Z}}\Im(p^S(n;\kappa+i\varepsilon))|\hat{\psi}(n)|^2
\]
since, by definition, $\Im(p^N(n;\kappa+i\varepsilon)) >0$ and
$\Im(p^S(n;\kappa+i\varepsilon))>0$ for all $n\in\mathbb{Z}$.

Using the principal symbol operators
$\sigma_{\kappa+i\varepsilon}^{N,x}$ and
$\sigma_{\kappa+i\varepsilon}^{S,x}$ we now define a $2\times 2$
regularizing operator $\mathcal{R}^x$ that provides an alternative to
the regularizing operator defined in equation~\eqref{eq:tildR1}. The
entries of the new regularizing operator $\mathcal{R}^x$ are given by
\begin{eqnarray}\label{eq:tildR2}
R_{11}^x&=&\frac{\nu}{1+\nu}I\nonumber\\
R_{12}^x&=&-\frac{2}{1+\nu}\sigma_{\kappa_1}^{S,x}\nonumber\\
R_{21}^x&=&\frac{2\nu}{1+\nu}\sigma_{\kappa_1}^{N,x}\nonumber\\
R_{22}^x&=&\frac{1}{1+\nu}I,
\end{eqnarray}
where $\kappa_1=\kappa+i\varepsilon$. As shown in what follows, use of
this regularizing operator instead of $\mathcal{R}$ in
equation~\eqref{eq:reg_diel} also leads to uniquely solvable integral
equation formulations of the second kind.

In order to state our unique solvability result it is necessary to use
parametric representations of all the integral operators in
equation~\eqref{eq:reg_diel}. These are given by
equation~\eqref{dstarparam} for the operator $D_k^*$, with a similar
expression for the operator $D_k$, and by the expression
$S_{j}^x[\phi](x(t))=\frac{i}{4}\int_0^{2\pi}H_0^{(1)}(k_j|x(t)-x(\tau)|)\phi(x(\tau))|x'(\tau)|d\tau$
for the single layer potential. The corresponding parametric
expression for the hypersingular operator $N_j$ is given
by~\cite{KressH}
$$N_j^x[\psi](x(t))=\frac{1}{4\pi|x'(t)|}\int_0^{2\pi}\cot{\frac{\tau-t}{2}}\psi'(\tau)d\tau+\frac{1}{|x'(t)|}\int_0^{2\pi}M_j(t,\tau)\psi(\tau)d\tau$$ 
where $\psi(\tau)=\psi(x(\tau))$ and where the kernels $M_j(t,\tau)$
are defined as
\begin{equation}\label{eq:defL}
M_j(t,\tau)=\frac{i}{4}k_j^2H_0^{(1)}(k|x(t)-x(\tau)|)x'(t)\cdot x'(\tau)-\tilde{M}_j(t,\tau)\nonumber
\end{equation}
with
\begin{equation}\label{eq:allkernels}
\tilde{M}_j(t,\tau)=\frac{\partial^2}{\partial t\ \partial\tau}\left\{\frac{i}{4}H_0^{(1)}(k_j|x(t)-x(\tau)|)+\frac{1}{4\pi}\ln\left(4\sin^2\frac{t-\tau}{2}\right)\right\}.\nonumber
\end{equation}
As indicated above and established in the following theorem, the
system~\eqref{eq:reg_diel} regularized by means of the principal
symbol operators~\eqref{eq:tildR2} indeed do form a uniquely solvable
system of equations.
\begin{theorem}
Let $\kappa_1=\kappa+i\varepsilon$ where $\kappa>0$ and $\varepsilon>0$. The integral equations
\begin{eqnarray}\label{eq:matrix_PSexplicit}
\left(\begin{array}{cc}\mathcal{B}_{11}&\mathcal{B}_{12}\\\mathcal{B}_{21}&\mathcal{B}_{22}\end{array}\right)\left(\begin{array}{c}a^1\\b^1\end{array}\right)&=&-\left(\begin{array}{c}u^{inc}|_{\Gamma}\\(\partial u^{inc}/\partial n)|_\Gamma\end{array}\right)
\end{eqnarray}
where
\begin{eqnarray}\label{eq:entriesPSexplicit}
\mathcal{B}_{11}&=&\frac{I}{2}+\frac{\nu}{1+\nu}D_1^x-\frac{1}{1+\nu}D_2^x-\frac{2\nu}{1+\nu}(S_1^x+\nu^{-1}S_2^x)\sigma_{\kappa_1}^{N,x}\nonumber\\
\mathcal{B}_{12}&=&\frac{1}{1+\nu}(S_2^x -S_1^x)-\frac{2}{1+\nu}(D_1^x+D_2^x)\sigma_{\kappa_1}^{S,x}\nonumber\\
\mathcal{B}_{21}&=&\frac{\nu}{1+\nu}(N_1^x-N_2^x)-\frac{2\nu}{1+\nu}(D_1^{*,x}+D_2^{*,x})\sigma_{\kappa_1}^{N,x}\nonumber\\
\mathcal{B}_{22}&=&\frac{I}{2}+\frac{\nu}{1+\nu}D^{*,x}_2-\frac{1}{1+\nu}D_1^{*,x}-\frac{2}{1+\nu}(N_1^x+\nu N_2^x)\sigma_{\kappa_1}^{S,x}
\end{eqnarray}
are uniquely solvable in $H^s[0,2\pi]\times H^s[0,2\pi]$ for all
$s\geq \frac{1}{2}$. Furthermore, the matrix operator
$$\mathcal{B}=\left(\begin{array}{cc}\mathcal{B}_{11}&\mathcal{B}_{12}\\
    \mathcal{B}_{21}& \mathcal{B}_{22}\end{array}\right):
H^s[0,2\pi]\times H^s[0,2\pi]\to H^s[0,2\pi]\times H^s[0,2\pi]$$ is a
compact perturbation of the identity matrix.
\end{theorem}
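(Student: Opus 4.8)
The plan is to prove the two assertions separately---that $\mathcal{B}$ is a compact perturbation of the identity, and that $\mathcal{B}$ is injective---and then to conclude unique solvability from the Fredholm alternative exactly as in Section~\ref{invertibility}. The starting observation is that, since $R_{11}^x=R_{11}=\frac{\nu}{1+\nu}I$ and $R_{22}^x=R_{22}=\frac{1}{1+\nu}I$ agree with the entries in~\eqref{eq:tildR1}, the matrix $\mathcal{B}$ is simply the operator $\mathcal{A}$ of the general system~\eqref{eq:reg_diel} evaluated at the regularizer $\mathcal{R}^x$ of~\eqref{eq:tildR2}; the derivation of~\eqref{eq:reg_diel} from the ansatz~\eqref{ansatz1}--\eqref{ansatz2} carries over verbatim once $\sigma_{\kappa_1}^{N,x}[a]$ and $\sigma_{\kappa_1}^{S,x}[b]$ are used as layer-potential densities, so that~\eqref{eq:matrix_PSexplicit} is indeed equivalent to the transmission problem.

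For the compactness statement I would mirror the proof of Theorem~\ref{thm2}, replacing its two main ingredients. The role played there by the Calder\'on identity $S_{\kappa_1}N_{\kappa_1}=(D_{\kappa_1}^*)^2-\frac{1}{4}I$ is now taken over by the \emph{exact} Fourier-multiplier identity
\[
\sigma_{\kappa_1}^{S,x}\,\sigma_{\kappa_1}^{N,x}=\sigma_{\kappa_1}^{N,x}\,\sigma_{\kappa_1}^{S,x}=-\frac{1}{4}I,
\]
which follows immediately from $p^S(\xi;\kappa_1)\,p^N(\xi;\kappa_1)\equiv-\frac{1}{4}$ and the $|x'(t)|$-normalizations built into~\eqref{eq:defPS1}--\eqref{eq:defPS2}. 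Substituting $S_j^x=\sigma_{\kappa_1}^{S,x}+(S_j^x-\sigma_{\kappa_1}^{S,x})$ in $\mathcal{B}_{11}$ and $N_j^x=\sigma_{\kappa_1}^{N,x}+(N_j^x-\sigma_{\kappa_1}^{N,x})$ in $\mathcal{B}_{22}$, the leading contributions $-\frac{2\nu}{1+\nu}(1+\nu^{-1})\,\sigma_{\kappa_1}^{S,x}\sigma_{\kappa_1}^{N,x}$ and $-\frac{2}{1+\nu}(1+\nu)\,\sigma_{\kappa_1}^{N,x}\sigma_{\kappa_1}^{S,x}$ each collapse to $+\frac{1}{2}I$, so that $\mathcal{B}_{11}=I+\mathcal{B}_{11}^r$ and $\mathcal{B}_{22}=I+\mathcal{B}_{22}^r$ with remainders assembled from the double layers $D_j^x,D_j^{*,x}$ (regularizing by three orders by Theorem~\ref{regL}) and from the products $(S_j^x-\sigma_{\kappa_1}^{S,x})\sigma_{\kappa_1}^{N,x}$, $(N_j^x-\sigma_{\kappa_1}^{N,x})\sigma_{\kappa_1}^{S,x}$. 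The off-diagonal entries need no cancellation: $\mathcal{B}_{12}$ and $\mathcal{B}_{21}$ are seen to be compact directly from the three- and one-order smoothing of $S_2-S_1$ and $N_1-N_2$ (Theorem~\ref{thmrev2}), the smoothing of the double layers, and the continuity $\sigma_{\kappa_1}^{S,x}:H^r\to H^{r+1}$, $\sigma_{\kappa_1}^{N,x}:H^r\to H^{r-1}$ recorded below~\eqref{eq:defPS2}.

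The technical heart---and the step I expect to be the main obstacle---is the lemma that the \emph{principal symbols} of the parametrized operators $S_j^x$ and $N_j^x$ coincide with the large-$|\xi|$ leading terms $p^S\sim\frac{1}{2|\xi|}$ and $p^N\sim-\frac{|\xi|}{2}$ of the multipliers~\eqref{eq:PS}, so that the differences gain one order of smoothing over the operators from which they are extracted:
\[
S_j^x-\sigma_{\kappa_1}^{S,x}:H^{r}[0,2\pi]\to H^{r+2}[0,2\pi],\qquad N_j^x-\sigma_{\kappa_1}^{N,x}:H^{r}[0,2\pi]\to H^{r}[0,2\pi].
\]
This is exactly the design criterion behind the definitions~\eqref{eq:PS}--\eqref{eq:defPS2}, and I would establish it within the pseudodifferential framework of~\cite{Saranen}: expand the logarithmic and Hankel kernels of $S_j^x$ and $N_j^x$ about the diagonal, read off their order $-1$ and order $+1$ principal symbols, and verify that these match $p^S$ and $p^N$. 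Granting the lemma, $\sigma_{\kappa_1}^{N,x}:H^s\to H^{s-1}$ followed by $(S_j^x-\sigma_{\kappa_1}^{S,x}):H^{s-1}\to H^{s+1}$ shows the products in $\mathcal{B}_{11}^r$ map $H^s\to H^{s+1}$, while $\sigma_{\kappa_1}^{S,x}:H^s\to H^{s+1}$ followed by $(N_j^x-\sigma_{\kappa_1}^{N,x}):H^{s+1}\to H^{s+1}$ shows the products in $\mathcal{B}_{22}^r$ map $H^s\to H^{s+1}$; hence $\mathcal{B}_{11}^r$ and $\mathcal{B}_{22}^r$ are compact on $H^s[0,2\pi]$ and $\mathcal{B}=I+(\text{compact})$ for every $s\geq\frac{1}{2}$.

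Injectivity I would obtain by a direct appeal to Theorem~\ref{thm3}. Because $R_{11}^x$ and $R_{22}^x$ are the required multiples of the identity, it remains only to verify the positivity hypotheses~\eqref{positive1}--\eqref{positive2} for $R_{12}^x=-\frac{2}{1+\nu}\sigma_{\kappa_1}^{S,x}$ and $R_{21}^x=\frac{2\nu}{1+\nu}\sigma_{\kappa_1}^{N,x}$. Rewriting the surface integrals in the parametrization gives $-\Im\int_\Gamma (R_{12}^x[b])\,\overline{b}\,ds=\frac{2}{1+\nu}\,\Im\int_0^{2\pi}\sigma_{\kappa_1}^{S,x}[b]\,\overline{b}\,|x'|\,dt$ and $\Im\int_\Gamma (R_{21}^x[a])\,\overline{a}\,ds=\frac{2\nu}{1+\nu}\,\Im\int_0^{2\pi}\sigma_{\kappa_1}^{N,x}[a]\,\overline{a}\,|x'|\,dt$, both of which are strictly positive by~\eqref{posS} and~\eqref{posN}, since the choice $\kappa_1=\kappa+i\varepsilon$ with $\kappa,\varepsilon>0$ forces $\Im p^S,\Im p^N>0$ and since $\nu>0$ makes the prefactors positive. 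Theorem~\ref{thm3} then gives injectivity of $\mathcal{B}$ on $H^s[0,2\pi]\times H^s[0,2\pi]$, and, combined with the compact-perturbation result, the Fredholm alternative yields unique solvability for all $s\geq\frac{1}{2}$, completing the proof.
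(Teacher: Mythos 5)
Your proof is correct, and it reaches both conclusions the paper does, but the Fredholm half follows a genuinely different route. The paper never re-derives the decomposition of $\mathcal{B}$: it writes each entry difference $\mathcal{B}_{ij}-\mathcal{A}_{ij}$ as a layer operator composed with $\sigma_{\kappa_1}^{N,x}-N_{\kappa_1}^x$ or $\sigma_{\kappa_1}^{S,x}-S_{\kappa_1}^x$, shows these differences are smoothing, and then simply invokes Theorem~\ref{thm2} for $\mathcal{A}$; consequently its key lemma concerns only the \emph{regularizing} wavenumber, namely $S_{\kappa_1}^x-\sigma_{\kappa_1}^{S,x}:H^s[0,2\pi]\to H^{s+3}[0,2\pi]$, proved in Appendix~\ref{app2} by reduction to the circle operator (whose eigenvalues $\frac{i\pi}{2}J_{|n|}(\kappa_1)H^{(1)}_{|n|}(\kappa_1)$ are compared with $p^S(n;\kappa_1)$ via uniform Bessel asymptotics), and the hypersingular counterpart $N_{\kappa_1}^x-\sigma_{\kappa_1}^{N,x}:H^s\to H^{s+1}$ is obtained \emph{without} any symbol expansion, through the Calder\'on identity $(N_{\kappa_1}^x-\sigma_{\kappa_1}^{N,x})S_{\kappa_1}^x=(D_{\kappa_1}^x)^2-\sigma_{\kappa_1}^{N,x}(S_{\kappa_1}^x-\sigma_{\kappa_1}^{S,x})$ combined with the same exact multiplier identity $\sigma_{\kappa_1}^{N,x}\sigma_{\kappa_1}^{S,x}=-\frac{I}{4}$ that you use. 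You instead rerun the proof of Theorem~\ref{thm2} directly on $\mathcal{B}$ --- your algebra checks out: $\frac{2\nu}{1+\nu}(1+\nu^{-1})=\frac{2}{1+\nu}(1+\nu)=2$, and both orderings $\sigma_{\kappa_1}^{S,x}\sigma_{\kappa_1}^{N,x}=\sigma_{\kappa_1}^{N,x}\sigma_{\kappa_1}^{S,x}=-\frac{I}{4}$ do hold exactly under the $|x'(t)|$ normalizations of \eqref{eq:defPS1}--\eqref{eq:defPS2} --- and your key lemma involves the \emph{physical} wavenumbers $k_j$ with one order less smoothing than the paper establishes, which still suffices since every product in your remainders lands in $H^{s+1}$ and is therefore compact on $H^s$. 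The one place where your plan is heavier than it needs to be is the proposed diagonal symbol expansion of the hypersingular kernel of $N_j^x$: that is feasible within the framework of~\cite{Saranen} (the $\cot\frac{\tau-t}{2}$ part of the parametric form of $N_j^x$ is exactly the multiplier $-\frac{|n|}{2}$), but it is precisely what the paper engineered around; you could shorten your argument either by the Calder\'on trick above, or by splitting $S_j^x-\sigma_{\kappa_1}^{S,x}=(S_j-S_{\kappa_1})+(S_{\kappa_1}^x-\sigma_{\kappa_1}^{S,x})$ and analogously for $N_j^x$, so that Theorem~\ref{thmrev2} absorbs the change of wavenumber and only the $\kappa_1$-difference requires Fourier analysis. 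The injectivity half of your argument --- positivity of $R_{12}^x$ and $R_{21}^x$ via \eqref{posS}--\eqref{posN} fed into Theorem~\ref{thm3}, then the Fredholm alternative --- coincides with the paper's.
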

\begin{proof} The proof follows from use of two increased regularity
  results. The first of these, which is established in
  appendix~\ref{app1} (see also~\cite{turc1} for a similar result),
  establishes that for all $s\geq\frac{1}{2}$
  \[
\quad \mbox{``The operator}\
\left(S_{\kappa_1}^x - \sigma_{\kappa_1}^{S,x}\right)\
\mbox{maps}\ H^{s}[0,2\pi]\
\mbox{continuously into}\ H^{s+3}[0,2\pi]\mbox{''}.
\]
To continue with our proof, we produce a corresponding result for the
hypersingular operator $N_{\kappa_1}^x$. To do this we use the
Calder\'on identities~\cite{Nedelec} and we obtain
$$(N_{\kappa_1}^x - \sigma_{\kappa_1}^{N,x})S_{\kappa_1}^x=-\frac{I}{4}+(D_{\kappa_1}^x)^2-\sigma_{\kappa_1}^{N,x}\sigma_{\kappa_1}^{S,x}-\sigma_{\kappa_1}^{N,x}(S_{\kappa_1}^x - \sigma_{\kappa_1}^{S,x}).$$
But, it follows immediately from~\eqref{eq:PS} that
$p^N(\xi;\kappa_1)p^S(\xi;\kappa_1)=-\frac{1}{4}$, and, thus,
from~\eqref{eq:defPS1} and~\eqref{eq:defPS2} we obtain
$\sigma_{\kappa_1}^{N,x}\sigma_{\kappa_1}^{S,x}=-\frac{I}{4}$. It
therefore follows that
$$(N_{\kappa_1}^x - \sigma_{\kappa_1}^{N,x})S_{\kappa_1}^x=(D^x_{\kappa_1})^2-\sigma_{\kappa_1}^{N,x}(S_{\kappa_1}^x - \sigma_{\kappa_1}^{S,x})$$
from which, given the regularizing property of the operator
$S_{\kappa_1}^x - \sigma_{\kappa_1}^{S,x}$, it follows that for all
$s\geq\frac{1}{2}$
\[
\quad \mbox{``The operator}\ \left(N_{\kappa_1}^x -
  \sigma_{\kappa_1}^{N,x}\right)\ \mbox{maps}\ H^{s}[0,2\pi]\
\mbox{continuously into}\ H^{s+1}[0,2\pi]\mbox{''}.
\]
Taking into account the regularizing properties of the operators
$S_{\kappa_1}^x - \sigma_{\kappa_1}^{S,x}$ and $N_{\kappa_1}^x -
\sigma_{\kappa_1}^{N,x}$ we therefore see that the following are
continuous mappings:
\begin{eqnarray}
\mathcal{B}_{11}-\mathcal{A}_{11}&=&-\frac{2\nu}{1+\nu}(S_1^x+\nu^{-1}S_2^x)(\sigma_{\kappa_1}^{N,x}-N_{\kappa_1}^x):H^s[0,2\pi]\to H^{s+2}[0,2\pi]\nonumber\\
\mathcal{B}_{12}-\mathcal{A}_{12}&=&-\frac{2}{1+\nu}(D_1^x+D_2^x)(\sigma_{\kappa_1}^{S,x}-S_{\kappa_1}^x):H^s[0,2\pi]\to H^{s+6}[0,2\pi]\nonumber\\
\mathcal{B}_{21}-\mathcal{A}_{21}&=&-\frac{2\nu}{1+\nu}(D^{*,x}_1+D^{*,x}_2)(\sigma_{\kappa_1}^{N,x}-N_{\kappa_1}^x):H^s[0,2\pi]\to H^{s+4}[0,2\pi]\nonumber\\
\mathcal{B}_{22}-\mathcal{A}_{22}&=&-\frac{2}{1+\nu}(N_1^x+\nu N_2^x)(\sigma_{\kappa_1}^{S,x}-S_{\kappa_1}^x):H^s[0,2\pi]\to H^{s+2}[0,2\pi]\nonumber
\end{eqnarray}
for all $s\geq\frac{1}{2}$. Thus, in view of Theorem~\ref{thm2} we see
that $\mathcal{B}$ is a compact perturbation of the identity matrix in
the space $H^s[0,2\pi]\times H^s[0,2\pi]$. Given the the positivity
relations~\eqref{posS} and~\eqref{posN} for the operators $R_{12}^x$
and $R_{21}^x$, further, it follows from Theorem~\ref{thm3} that the
operator $\mathcal{B}$ is injective.  The claimed invertibility of
$\mathcal{B}$ therefore results from the Fredholm theory, and the
proof is thus complete.
\end{proof}

\section{Numerical results: Nystr\"om discretizations
  \label{prec}} To demonstrate the properties of the integral
equations introduced in this paper vis-a-vis those provided by
classical equations, we introduced high-order Nytr\"om numerical
implementations for all of these formulations. Our integral algorithms
are based on the methodologies put forth
in~\cite{KressH,kusmaul,martensen} for boundary integral operators
with real wavenumbers and their extensions to complex wavenumbers
introduced in~\cite{br-turc,turc1}. These methods, which are based on
use of global trigonometric representation of integral densities
together with explicit quadrature for products of oscillatory
exponentials with weakly singular and singular kernels, will not
discussed here at any length. The implementation of the principal
symbol operators introduced in Section~\ref{reg-II}, in turn, is
straightforward: it amounts to adequate uses of Fast Fourier
Transforms and application of diagonal operators in Fourier
space~\cite{turc1}.

In this section we present a variety of numerical results that
demonstrate the properties of the various formulations. Solutions of
the linear systems arising from the Nystr\"om discretizations were
obtained in all cases by means of the fully complex un-restarted
version of the iterative solver GMRES~\cite{SaadSchultz}. The values
used in each case for the complex wavenumber $\kappa_1$ that enters
the regularizing operators~\eqref{eq:tildR1} and~\eqref{eq:tildR2},
which were selected by means of numerical experimentation to minimize
the number of iterations required by the GMRES iterative solver, are
presented in the captions of the various tables.

We present scattering experiments concerning the following three
smooth scatterers: (a)~a unit circle, (b)~a kite-shaped scatterer
given by $x(t)=(\cos{t}+0.65\cos{2t}-0.65,1.5\sin{t})$~\cite{KressH},
and (c)~a cavity whose parametrization is given by
$x(t)=(x_1(t),x_2(t)),\ x_1(t)=(\cos{t}+2\cos{2t})/2.5,\
x_2(t)=Y(t)/2-Y_s(t)/48$, where $Y(t)=\sin{t}+\sin{2t}+1/2\sin{3t},\
Y_s(t)=-4\sin{t}+7\sin{2t}-6\sin{3t}+2\sin{4t}$; see
Figure~\ref{cavity}. We note that each of these geometries has a
diameter equal to $2$. For all of our experiments we assumed
plane-wave incidence $u^{\rm inc}$: vertical incidence in the
direction of the negative $y$-axis for the circular geometry,
$45^\circ$ incidence pointing into the fourth quadrant for the kite
geometry, and horizontal incidence, pointing towards the positive
$x$-axis, for the cavity structure.
\begin{figure}
\centering
\includegraphics[width=0.55\textwidth]{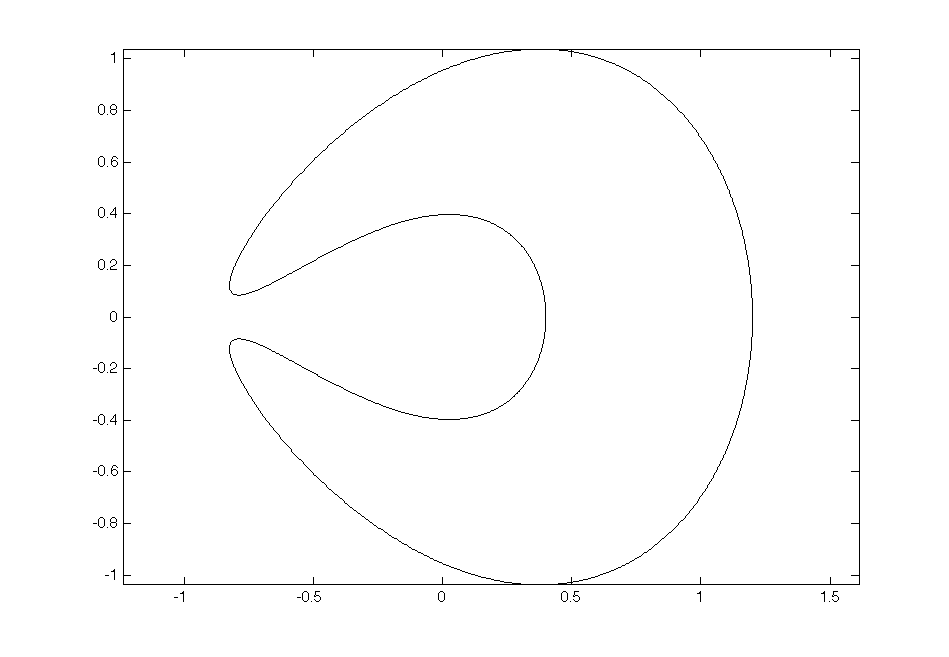}
\caption{Penetrable scatterer of diameter 2 for which the outer region
  $\Omega_1$ contains a cavity-like structure.}
\label{cavity}
\end{figure}

In each case our tables display maximum errors in the far-field
amplitudes. The far-field amplitude $u^{1}_\infty(\hat{\mathbf{x}})$
for each scattering direction
$\hat{\mathbf{x}}=\frac{\mathbf{x}}{|\mathbf{x}|}$ is given by
\begin{equation}
\label{eq:far_field}
u^{1}(\mathbf{x})=\frac{e^{ik_1|\mathbf{x}|}}{\sqrt{|\mathbf{x}|}}\left(u^{1}_\infty(\hat{\mathbf{x}})+\mathcal{O}\left(\frac{1}{|\mathbf{x}|}\right)\right),\
|\mathbf{x}|\rightarrow\infty;
\end{equation}
the far-field error for each scattering direction and each numerical
solution, in turn, was evaluated by means of the relation
\begin{equation}
\label{eq:farField_error}
\varepsilon_\infty={\rm max}|u_\infty^{1,\rm
calc}(\hat{\mathbf{x}})-u_\infty^{1,\rm ref}(\hat{\mathbf{x}})|
\end{equation}
where $u_\infty^{1, \rm calc}$ and $u_\infty^{1,\rm ref} $ denote
numerical and reference far fields, respectively. The reference far
fields $u_\infty^{1,\rm ref}$ were produced using the Mie-series in
the case of circular geometries, and the classical
formulation~\eqref{eq:system_trans} with adequately refined
discretizations and GMRES tolerance set to $10^{-12}$ for all other
geometries. The maximum of the far field errors displayed on our
tables were evaluated through maximization of the far-field error for
a finite but sufficiently large number of scattering directions
$\hat{\mathbf{x}}$.

Besides far field errors, the tables presented in this section display
the numbers of iterations required by the GMRES solver to reach
relative residuals that are specified in each case. Denoting the
underlying frequency by $\omega$ (so that the wavenumber in the $j$-th
dielectric medium $j=1,2$ is given by $k_j =
\omega\sqrt{\varepsilon_j}$) we used discretizations ranging from 4 to
10 discretization points per interior wavelength $\lambda_2 =
2\pi/k_2$, for frequencies $\omega$ in the medium to the
high-frequency range corresponding to acoustic scattering problems of
sizes ranging from $2.5\lambda_2$ to $81.6\lambda_2$. In all cases we
assumed the exterior domain $\Omega_1$ is occupied by vacuum:
$\varepsilon_1 = 1$.  The columns ``Unknowns'' in our tables display
the numbers of unknowns used in each case. (Since all of the systems
of integral equations used involve two unknown integral densities, the
number of unknowns used in each case equals two times the number of
discretization points used.)  The various integral formulations
considered are as follows: the Second Kind
formulations~\eqref{eq:system_trans1}, denoted by
SK~\eqref{eq:system_trans1} and~\eqref{eq:system_trans}
(SK~\eqref{eq:system_trans}); the First Kind
formulation~\eqref{eq:system_trans_FK}
(FK~\eqref{eq:system_trans_FK}), and the Second Kind Regularized
formulations~\eqref{eq:reg_diel}--\eqref{eq:tildR1}
(SKR~\eqref{eq:reg_diel}--\eqref{eq:tildR1})
and~\eqref{eq:reg_diel}--\eqref{eq:tildR2}
(SKR~\eqref{eq:reg_diel}--\eqref{eq:tildR2}).

% SK[\eqref{eq:system_trans1}] SK[\eqref{eq:system_trans}]
% FK[\eqref{eq:system_trans_FK}]
% SKR[\eqref{eq:reg_diel}-\eqref{eq:tildR1}]
% SKR[\eqref{eq:reg_diel}-\eqref{eq:tildR2}]

As it can be seen in Table~\ref{results0}, our transmission solvers
converge with high-order accuracy.
\begin{table}
\begin{center}\resizebox{!}{2.0cm}
{
\begin{tabular}{|c|c|c|c|c|c|c|c|c|c|}
  \hline
  Geometry & Unknowns & \multicolumn{2}{c|}{SK \eqref{eq:system_trans}} &\multicolumn{2}{c|}{FK \eqref{eq:system_trans_FK}}&\multicolumn{2}{c|}{SKR \eqref{eq:reg_diel}--\eqref{eq:tildR1}}& \multicolumn{2}{c|}{SKR \eqref{eq:reg_diel}--\eqref{eq:tildR2}}\\
  \cline{3-10}
  & & Iter.& $\epsilon_\infty$ &Iter.&$\epsilon_\infty$&Iter.&$\epsilon_\infty$&Iter.&$\epsilon_\infty$\\
  \hline
  Circle & 64 & 23 & 2.6 $\times$ $10^{-2}$& 22 & 4.0 $\times$ $10^{-2}$ & 16 & 1.6 $\times$ $10^{-2}$ & 16 & 1.6 $\times$ $10^{-2}$\\
  Circle & 128 & 22 & 6.2 $\times$ $10^{-8}$ & 22 & 6.1 $\times$ $10^{-8}$ & 16 & 6.3 $\times$ $10^{-8}$ & 16 & 5.7 $\times$ $10^{-9}$\\
  \hline
  \hline
  Kite & 128 & 45 & 3.0 $\times$ $10^{-2}$& 47 & 6.7 $\times$ $10^{-2}$ & 31 & 2.8 $\times$ $10^{-2}$ & 33 & 1.9 $\times$ $10^{-2}$\\
  Kite & 192 & 45 & 1.8 $\times$ $10^{-4}$ & 50 & 1.6 $\times$ $10^{-4}$ & 31 & 7.0 $\times$ $10^{-5}$ & 32 & 7.9 $\times$ $10^{-5}$\\
  Kite & 256 & 45 & 4.5 $\times$ $10^{-8}$ & 56 & 1.3 $\times$ $10^{-7}$ & 31 & 1.7 $\times$ $10^{-7}$ & 31 & 1.7 $\times$ $10^{-7}$\\
  \hline
  \hline
  Cavity & 128 & 57 & 9.3 $\times$ $10^{-3}$& 59 & 6.3 $\times$ $10^{-2}$ & 39 & 4.8 $\times$ $10^{-2}$ & 51 & 3.8 $\times$ $10^{-2}$\\
  Cavity & 192 & 58 & 3.5 $\times$ $10^{-5}$ & 64 & 1.6 $\times$ $10^{-4}$ & 39 & 1.1 $\times$ $10^{-4}$ & 50 & 8.9 $\times$ $10^{-5}$\\
  Cavity & 256 & 58 & 4.1 $\times$ $10^{-8}$ & 68 & 1.8 $\times$ $10^{-7}$ & 39 & 1.5 $\times$ $10^{-7}$ & 50 & 1.6 $\times$ $10^{-7}$\\
  \hline
\end{tabular}
}\caption{\label{results0} High-order accuracy of the solvers used
  demonstrated for the three geometries: circle, kite, and cavity. For
  these experiments we used $\nu=1$, $\omega=8$, $\epsilon_1=1$, and
  $\epsilon_2=2$, with GMRES residual tolerance equal to $10^{-8}$.}
\end{center}
\end{table}
Table~\ref{results2}, in turn, presents the computational times
required by the matrix-vector products for each of the four
formulations SK~\eqref{eq:system_trans},
FK~\eqref{eq:system_trans_FK},
SKR~\eqref{eq:reg_diel}--\eqref{eq:tildR1}, and
SKR~\eqref{eq:reg_diel}--\eqref{eq:tildR2} respectively. The results
presented here and throughout this paper were produced by a MATLAB
implementation of the Nystr\"om discretization on a single core of a
MacBookPro computer with a $2.3$ GHz Quad-core Intel i7 processor with
16 GB of memory. We present computational times for the kite geometry
only, as the computational times required by the other geometries
considered in this text are extremely close to those required for the
kite at the same levels of discretization. As it can be seen from the
results in Table~\ref{results2}, the computational times required by a
matrix-vector product for the SK \eqref{eq:system_trans} and FK
\eqref{eq:system_trans_FK} formulations are almost identical. Notably,
the computational times required by a matrix-vector product of the
SKR~\eqref{eq:reg_diel}--\eqref{eq:tildR2} formulation are on average
{\em only} $1.01$ times more expensive than those required by the
classical SK \eqref{eq:system_trans} and FK \eqref{eq:system_trans_FK}
formulations, whereas the computational times required by a
matrix-vector product required by a matrix-vector product of the
SKR~\eqref{eq:reg_diel}--\eqref{eq:tildR1} formulation are on average
at most $1.3$ times more expensive than those required by the
classical SK~\eqref{eq:system_trans} and FK~\eqref{eq:system_trans_FK}
formulations. Given that iteration numbers required by the SKR
formulation can be as small as one-half or even less than the the
corresponding numbers required by the SK and FK formulations (as
demonstrated by Tables~\ref{results21}--\ref{results25}) the new SKR
formulations can lead to savings in computing times by a factor of two
or better---especially in the important high frequency regime.

We note that preconditioners of the FK~\eqref{eq:system_trans_FK}
formulations have been proposed in the literature~\cite{antoineB}:
denoting by $\mathcal{C}$ the matrix operator on the left-hand side of
the first kind equations~(\ref{eq:system_trans_FK}), the idea is to to
use the very operator $\mathcal{C}$ as a preconditioner, since, on
account of Calder\'on's identities~\cite{Nedelec}, the operator
$\mathcal{C}^2$ is a compact perturbation of the identity matrix
operator. We have pursued this avenue, and we have observed that,
although the resulting $\mathcal{C}^2$ formulation converge in half as
many iterations than the $\mathcal{C}$-based formulations, the
$\mathcal{C}^2$ formulations are twice as expensive per iteration.
Thus, that preconditioning strategy does not give rise to any actual
computing-time gains.

\begin{table}
\begin{center}
\begin{tabular}{|c|c|c|c|c|c|}
  \hline
  Geometry & Unknowns & SK \eqref{eq:system_trans}&FK \eqref{eq:system_trans_FK} & SKR \eqref{eq:reg_diel}--\eqref{eq:tildR1} &SKR~\eqref{eq:reg_diel}--\eqref{eq:tildR2}\\
  \hline
  Kite & 512 & 12.99 sec & 12.78 sec& 16.54 sec & 13.97 sec\\ 
  Kite & 1024 & 51.55 sec & 52.34 sec & 66.39 sec & 52.39 sec\\ 
  \hline
\end{tabular}
\caption{\label{results2} Computational times required by a matrix-vector product for each of the four integral equation formulations of the transmission problems considered in this text.}
\end{center}
\end{table} 
 
\begin{table}
\begin{center}\resizebox{!}{1.2cm}
{
\begin{tabular}{|c|c|c|c|c|c|c|c|c|c|c|c|}
\hline
$\omega$ & $\epsilon_1$ & $\epsilon_2$ & Unknowns & \multicolumn{2}{c|}{SK \eqref{eq:system_trans}} &\multicolumn{2}{c|}{FK \eqref{eq:system_trans_FK}}&\multicolumn{2}{c|}{SKR \eqref{eq:reg_diel}--\eqref{eq:tildR1}}& \multicolumn{2}{c|}{SKR \eqref{eq:reg_diel}--\eqref{eq:tildR2}}\\
\cline{5-12}
 & & & & Iter.& $\epsilon_\infty$ &Iter.&$\epsilon_\infty$&Iter.&$\epsilon_\infty$&Iter.&$\epsilon_\infty$\\
\hline
16 & 1 & 2 & 256 & 37 & 7.0 $\times$ $10^{-8}$& 37 & 1.2 $\times$ $10^{-7}$ & 31 & 1.6 $\times$ $10^{-7}$& 31 & 1.6 $\times$ $10^{-7}$\\
32 & 1 & 2 &  512 & 58 & 6.2 $\times$ $10^{-8}$ & 59 & 3.7 $\times$ $10^{-7}$ & 40 & 2.2 $\times$ $10^{-7}$ & 40 & 3.9 $\times$ $10^{-7}$ \\
64 & 1 & 2 & 1024 & 99 & 1.8 $\times$ $10^{-7}$ & 99 & 4.2 $\times$ $10^{-7}$ & 61& 5.0 $\times$ $10^{-7}$& 61 & 4.3 $\times$ $10^{-7}$\\
\hline
\end{tabular}
}
\caption{\label{results21} Scattering experiment for the circular
  geometry with $\nu=1$ and, for the
  SKR~\eqref{eq:reg_diel}--\eqref{eq:tildR1} and
  SKR~\eqref{eq:reg_diel}--\eqref{eq:tildR2} formulations,
  $\kappa_1=(k_1+k_2)/2+ 4i$. GMRES residual was set to equal
  $10^{-8}$.}
\end{center}
\end{table}

\begin{table}
\begin{center}
\resizebox{!}{1.4cm}
{
\begin{tabular}{|c|c|c|c|c|c|c|c|c|c|c|c|}
\hline
$\omega$ & $\epsilon_1$ & $\epsilon_2$ & Unknowns & \multicolumn{2}{c|}{SK \eqref{eq:system_trans}} &\multicolumn{2}{c|}{FK \eqref{eq:system_trans_FK}}&\multicolumn{2}{c|}{SKR \eqref{eq:reg_diel}--\eqref{eq:tildR1}}& \multicolumn{2}{c|}{SKR \eqref{eq:reg_diel}--\eqref{eq:tildR2}}\\
\cline{5-12}
 & & & & Iter.& $\epsilon_\infty$ &Iter.&$\epsilon_\infty$&Iter.&$\epsilon_\infty$&Iter.&$\epsilon_\infty$\\
\hline
16 & 1 & 4 & 512 & 65 & 5.0$\times$ $10^{-4}$& 71 & 1.5 $\times$ $10^{-3}$ & 42 & 1.7 $\times$ $10^{-3}$& 46 & 1.6 $\times$ $10^{-3}$\\
32 & 1 & 4 & 1024 & 93 & 3.1 $\times$ $10^{-3}$ & 104 & 2.0 $\times$ $10^{-3}$ &  52 & 2.6 $\times$ $10^{-3}$ & 62 & 2.6 $\times$ $10^{-3}$ \\
64 & 1 & 4 & 2048 & 128 & 1.1 $\times$ $10^{-3}$ & 138 & 2.3 $\times$ $10^{-3}$ & 64 &  1.7 $\times$ $10^{-3}$ & 74 & 1.6 $\times$ $10^{-3}$\\
128 & 1 & 4 & 4096 & 167 & 1.2 $\times$ $10^{-3}$ & 182 & 2.3 $\times$ $10^{-3}$ & 78 & 1.6 $\times$ $10^{-3}$ & 83 & 1.9 $\times$ $10^{-3}$\\
\hline
\end{tabular}
}
\caption{\label{results22} Scattering experiments for the kite
  geometry with $\nu=1$, and for the SKR~\eqref{eq:reg_diel}--\eqref{eq:tildR1} and
  SKR~\eqref{eq:reg_diel}--\eqref{eq:tildR2} formulations,  $\kappa_1=(k_1+k_2)/2+i\ \omega/4$. GMRES
  residual was set to equal $10^{-4}$.}
\end{center}
\end{table}

\begin{table}
\begin{center}
\resizebox{!}{1.4cm}
{
\begin{tabular}{|c|c|c|c|c|c|c|c|c|c|c|c|}
  \hline
  $\omega$ & $\epsilon_1$ & $\epsilon_2$ & Unknowns & \multicolumn{2}{c|}{SK \eqref{eq:system_trans}} &\multicolumn{2}{c|}{FK \eqref{eq:system_trans_FK}}&\multicolumn{2}{c|}{SKR \eqref{eq:reg_diel}--\eqref{eq:tildR1}}& \multicolumn{2}{c|}{SKR \eqref{eq:reg_diel}--\eqref{eq:tildR2}}\\
  \cline{5-12}
  & & & & Iter.& $\epsilon_\infty$ &Iter.&$\epsilon_\infty$&Iter.&$\epsilon_\infty$&Iter.&$\epsilon_\infty$\\
  \hline
  16 & 1 & 4 & 512 & 111 & 8.2 $\times$ $10^{-4}$& 114 & 2.9 $\times$ $10^{-3}$ & 64 & 4.9 $\times$ $10^{-3}$& 70 & 4.9 $\times$ $10^{-3}$\\
  32 & 1 & 4 & 1024 & 168 & 1.2 $\times$ $10^{-3}$ & 179 & 6.6 $\times$ $10^{-3}$ &  91 & 4.9 $\times$ $10^{-3}$ & 104 & 5.0 $\times$ $10^{-3}$ \\
  64 & 1 & 4 & 2048 & 266 & 1.3 $\times$ $10^{-3}$ & 289 & 2.9 $\times$ $10^{-3}$ & 120 &  3.8 $\times$ $10^{-3}$ & 145 & 3.7 $\times$ $10^{-3}$\\
  128 & 1 & 4 & 4096 & 396 & 1.7 $\times$ $10^{-3}$ & 433 & 3.2 $\times$ $10^{-3}$ & 157 & 3.2 $\times$ $10^{-3}$ & 205 & 3.3 $\times$ $10^{-3}$\\
  \hline
\end{tabular}
}
\caption{\label{results23} Scattering experiments for the cavity geometry, $\nu=1$, and for the SKR~\eqref{eq:reg_diel}--\eqref{eq:tildR1} and SKR~\eqref{eq:reg_diel}--\eqref{eq:tildR2} formulations, $\kappa_1=(k_1+k_2)/2+i\ \omega/4 $. GMRES residual was set to equal $10^{-4}$.}
\end{center}
\end{table}

\begin{table}
\begin{center}
\resizebox{!}{1.4cm}
{
\begin{tabular}{|c|c|c|c|c|c|c|c|c|c|c|c|}
\hline
$\omega$ & $\epsilon_1$ & $\epsilon_2$ & Unknowns & \multicolumn{2}{c|}{SK \eqref{eq:system_trans}} &\multicolumn{2}{c|}{FK \eqref{eq:system_trans_FK}}&\multicolumn{2}{c|}{SKR \eqref{eq:reg_diel}--\eqref{eq:tildR1}}& \multicolumn{2}{c|}{SKR \eqref{eq:reg_diel}--\eqref{eq:tildR2}}\\
\cline{5-12}
 & & & & Iter.& $\epsilon_\infty$ &Iter.&$\epsilon_\infty$&Iter.&$\epsilon_\infty$&Iter.&$\epsilon_\infty$\\
\hline
8 & 1 & 16 & 512 & 79 & 1.4 $\times$ $10^{-3}$& 210 & 2.4 $\times$ $10^{-3}$ & 65 & 1.8 $\times$ $10^{-3}$& 66 & 2.0 $\times$ $10^{-3}$\\
16 & 1 & 16 & 1024 & 122 & 5.0 $\times$ $10^{-3}$ & 283 & 6.8 $\times$ $10^{-3}$ & 97 & 5.6 $\times$ $10^{-3}$ & 91 & 5.5 $\times$ $10^{-3}$ \\
32 & 1 & 16 & 2048 & 176 & 7.8 $\times$ $10^{-3}$ & 373 & 3.0 $\times$ $10^{-3}$ & 112 &  2.2 $\times$ $10^{-3}$ & 109 & 1.9 $\times$ $10^{-3}$\\
64 & 1 & 16 & 4096 & 263 & 9.1 $\times$ $10^{-4}$ & 497 & 3.2 $\times$ $10^{-3}$ & 147 & 1.9 $\times$ $10^{-3}$ & 147 & 2.6 $\times$ $10^{-3}$\\
128 & 1 & 16 & 8192 & 338 & 7.7 $\times$ $10^{-4}$ & 649  & 3.0 $\times$ $10^{-3}$ & 187 & 2.1 $\times$ $10^{-3}$ & 187 & 2.2 $\times$ $10^{-3}$\\
\hline
\end{tabular}
}
\caption{\label{results24} Scattering experiments for the kite
  geometry with $\nu=\epsilon_1/\epsilon_2$, and for the SKR~\eqref{eq:reg_diel}--\eqref{eq:tildR1} and
  SKR~\eqref{eq:reg_diel}--\eqref{eq:tildR2} formulations, $\kappa_1=(k_1+k_2)/2+i\
  \omega $. GMRES
  residual was set to equal $10^{-4}$. Through additional experiments we have
  determined that the formulations SK~\eqref{eq:system_trans1} require
  iterations numbers equal to 129, 207, 288, 318, and 393 for the
  cases when $\omega=8,16,32,64$ and $128$, respectively.}
\end{center}
\end{table}

\begin{table}
\begin{center}
\resizebox{!}{1.4cm}
{
\begin{tabular}{|c|c|c|c|c|c|c|c|c|c|c|c|}
  \hline
  $\omega$ & $\epsilon_1$ & $\epsilon_2$ & Unknowns & \multicolumn{2}{c|}{SK \eqref{eq:system_trans}} &\multicolumn{2}{c|}{FK \eqref{eq:system_trans_FK}}&\multicolumn{2}{c|}{SKR \eqref{eq:reg_diel}--\eqref{eq:tildR1}}& \multicolumn{2}{c|}{SKR \eqref{eq:reg_diel}--\eqref{eq:tildR2}}\\
  \cline{5-12}
  & & & & Iter.& $\epsilon_\infty$ &Iter.&$\epsilon_\infty$&Iter.&$\epsilon_\infty$&Iter.&$\epsilon_\infty$\\
  \hline
  8 & 1 & 16 & 512 & 114 & 1.3 $\times$ $10^{-3}$& 246 & 8.7 $\times$ $10^{-3}$ & 85 & 9.0 $\times$ $10^{-3}$& 85 & 8.8 $\times$ $10^{-3}$\\
  16 & 1 & 16 & 1024 & 182 & 2.7 $\times$ $10^{-3}$ & 429 & 1.6 $\times$ $10^{-2}$ & 148 & 1.6 $\times$ $10^{-2}$ & 148 & 1.5 $\times$ $10^{-3}$ \\
  32 & 1 & 16 & 2048 & 341 & 3.6 $\times$ $10^{-3}$ & 661 & 2.1 $\times$ $10^{-2}$ & 200 &  2.1 $\times$ $10^{-2}$ & 202 & 2.0 $\times$ $10^{-2}$\\
  64 & 1 & 16 & 4096 & 489 & 3.1 $\times$ $10^{-3}$ & 1094 & 2.9 $\times$ $10^{-3}$ & 278 & 3.4 $\times$ $10^{-3}$ & 297 & 2.1 $\times$ $10^{-3}$\\
  128 & 1 & 16 & 8192 & 877 & 7.0 $\times$ $10^{-4}$ & 1560 & 2.3 $\times$ $10^{-3}$ & 397 & 1.9 $\times$ $10^{-3}$ & 406  & 1.6 $\times$ $10^{-3}$\\

\hline
\end{tabular}
}
\caption{\label{results25} Scattering experiments for the cavity
  geometry, $\nu=\epsilon_1/\epsilon_2$, for the SKR~\eqref{eq:reg_diel}--\eqref{eq:tildR1} and
  SKR~\eqref{eq:reg_diel}--\eqref{eq:tildR2} formulations, $\kappa_1=(k_1+k_2)/2+i\
  \omega $. GMRES residual was set to equal $10^{-4}$.  Through additional experiments we have
  determined that the formulations SK~\eqref{eq:system_trans1} require
  iterations numbers equal to 167, 290, 481, 663, and 1232
  respectively for the cases when $\omega=8,16,32,64$ and $128$.}
\end{center}
\end{table}

\section*{Acknowledgments}
Oscar Bruno thanks NSF and AFOSR for their support during the
preparation of this work. Yassine Boubendir gratefully acknowledges
support from NSF through contract DMS-1016405. Catalin Turc gratefully
acknowledge support from NSF through contract DMS-1008076.

\appendix
\section{Positivity of complex wavenumber operators}\label{app1}

In this section we establish certain positivity properties for the
operators $S_{\kappa+i\varepsilon}$ and $N_{\kappa + i\epsilon}$
(cf.~\cite{Nedelec}).
\begin{lemma}\label{sixp1} Let $\kappa>0$ and $\varepsilon>0$. Then
$$\Im\ \int_\Gamma S_{\kappa+i\varepsilon} [\phi]\ \overline{\phi}\ ds >0,\quad {\rm for\ all}\ \phi\in H^{-1/2}(\Gamma),\ \phi\neq 0.$$
\end{lemma}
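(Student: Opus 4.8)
The plan is to convert the boundary bilinear form into a volume energy integral by means of Green's identities. Write $k=\kappa+i\varepsilon$ and introduce the single-layer potential
\[
u(\mathbf z)=S_k[\phi](\mathbf z)=\int_\Gamma G_k(\mathbf z-\mathbf y)\,\phi(\mathbf y)\,ds(\mathbf y),\qquad \mathbf z\in\mathbb R^2\setminus\Gamma,
\]
which solves $\Delta u+k^2u=0$ in $\Omega_1$ and in $\Omega_2$. The first thing to record is the behaviour at infinity: since $\Im k=\varepsilon>0$, the large-argument asymptotics of $H_0^{(1)}(k|\mathbf z|)$ contain the factor $e^{ik|\mathbf z|}=e^{i\kappa|\mathbf z|}e^{-\varepsilon|\mathbf z|}$, so $u$ and $\nabla u$ decay exponentially as $|\mathbf z|\to\infty$. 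Consequently $u\in H^1(\Omega_2)$ and $u\in H^1(\Omega_1)$, the volume integrals below converge, and the contribution of a large circle bounding $\Omega_1$ vanishes in the limit.

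Next I would invoke the jump relations for the single layer. Its trace is continuous across $\Gamma$ and equals $S_k[\phi]$, whereas the normal derivative jumps by the density: with $n$ oriented into $\Omega_2$ and using the subscripts $+$ and $-$ for the limits from $\Omega_1$ and $\Omega_2$ respectively, one has $\phi=\partial u_+/\partial n-\partial u_-/\partial n$. Interpreting $\int_\Gamma S_k[\phi]\,\overline\phi\,ds$ as the $H^{1/2}$–$H^{-1/2}$ duality pairing (legitimate since $S_k:H^{-1/2}(\Gamma)\to H^{1/2}(\Gamma)$ by Theorem~\ref{regL}), this gives
\[
\int_\Gamma S_k[\phi]\,\overline\phi\,ds=\int_\Gamma u\,\overline{\partial u_+/\partial n}\,ds-\int_\Gamma u\,\overline{\partial u_-/\partial n}\,ds.
\]

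I would then apply Green's first identity in each region, using $\Delta\overline u=-\overline{k^2}\,\overline u$ and the outward normals $-n$ for the bounded domain $\Omega_2$ and $+n$ for the exterior domain $\Omega_1$. This yields $\int_\Gamma u\,\overline{\partial u_-/\partial n}\,ds=-\int_{\Omega_2}(|\nabla u|^2-\overline{k^2}|u|^2)\,dx$ and $\int_\Gamma u\,\overline{\partial u_+/\partial n}\,ds=\int_{\Omega_1}(|\nabla u|^2-\overline{k^2}|u|^2)\,dx$, the latter using the vanishing of the term at infinity. Adding the two contributions collapses the boundary form into a single volume integral over all of space,
\[
\int_\Gamma S_k[\phi]\,\overline\phi\,ds=\int_{\mathbb R^2}\bigl(|\nabla u|^2-\overline{k^2}\,|u|^2\bigr)\,dx.
\]
Taking imaginary parts and using $\Im(\overline{k^2})=-2\kappa\varepsilon$ produces $\Im\int_\Gamma S_k[\phi]\,\overline\phi\,ds=2\kappa\varepsilon\int_{\mathbb R^2}|u|^2\,dx\ge 0$.

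Finally I would upgrade nonnegativity to strict positivity. Since $\kappa>0$ and $\varepsilon>0$, equality forces $\int_{\mathbb R^2}|u|^2\,dx=0$, hence $u\equiv 0$ in $\mathbb R^2\setminus\Gamma$; but then the jump relation gives $\phi=\partial u_+/\partial n-\partial u_-/\partial n=0$, contradicting $\phi\ne 0$. I expect the routine computation to be unproblematic; the only points that demand genuine care are the sign bookkeeping in the jump relations and in Green's identities relative to the stated orientation of $n$, and the rigorous justification that the exponential decay afforded by $\varepsilon>0$ makes the exterior energy integral converge and annihilates the boundary-at-infinity term. These are the two places where the hypothesis $\varepsilon>0$ is genuinely used, and they constitute the main obstacle to a fully rigorous argument.
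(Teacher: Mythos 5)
Your proof is correct and follows essentially the same route as the paper's: apply Green's first identity to the single-layer potential $u=S_{\kappa+i\varepsilon}[\phi]$ in $\Omega_2$ and in $\Omega_1\cap B_R$, use the exponential decay afforded by $\varepsilon>0$ to annihilate the boundary term at infinity, and invoke the continuity of the trace together with the normal-derivative jump $\phi$ to collapse the boundary pairing into $2\kappa\varepsilon\int_{\mathbb{R}^2}|u|^2\,dx$. The only difference is cosmetic: you spell out the upgrade from nonnegativity to strict positivity ($\int_{\mathbb{R}^2}|u|^2\,dx=0$ forces $u\equiv 0$, hence $\phi=0$ by the jump relation), a step the paper leaves implicit with ``and the Lemma follows.''
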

\begin{proof} Let
$$U(z)=S_{\kappa+i\varepsilon}[\phi](z)=\int_\Gamma G_{\kappa+i\varepsilon}(z-y)\phi(y)ds(y),\ z\in \mathbb{R}^2\setminus \Gamma$$
and call $U^{+}$ and $U^-$ the restrictions of $U$ to $\Omega_1$ and
$\Omega_2$, respectively. Given that $U^-$ is a solution of the
Helmholtz equation with wavenumber $\kappa+i\epsilon$ in the domain
$\Omega_2$ we obtain
$$\int_\Gamma U^{-}\ \overline{\partial_n U^{-}}ds = \int_{\Omega_2}(|\nabla U^{-}|^2-(\kappa-i\varepsilon)^2|U^{-}|^2)dx$$
from which it follows that
$$\Im\ \int_\Gamma U^{-}\ \overline{\partial_{n} U^{-}}ds = 2\kappa\varepsilon\int_{\Omega_2} |U^{-}|^2dx.$$
Let $R>0$ be such that $\Omega_2$ is contained in $B_R=\{x:\|x\|_2\leq
R\}$. An application of the Green's formula in the domain
$\Omega_1\bigcap B_R$ gives
$$\int_{\partial B_R}U^{+}\ \overline{\partial_r U^{+}}ds - \int_\Gamma U^{+}\ \overline{\partial_n U^{+}}ds=\int_{\Omega_1\bigcap B_R}(|\nabla U^{+}|^2-(\kappa-i\varepsilon)^2|U^{+}|^2)dx$$
from which we obtain
$$
\Im\ \int_{\partial B_R}U^{+}\ \overline{\partial_r U^{+}}ds - \Im\
\int_\Gamma U^{+}\ \overline{\partial_n U^{+}}ds=2\kappa\varepsilon
\int_{\Omega_1\bigcap B_R}|U^{+}|^2dx.$$ Given that $\varepsilon >0$
and the resulting exponential decay of $U^{+}(x)$ as $|x|\to \infty$
it follows that $\lim_{R\to\infty}\int_{\partial B_R}U^{+}\
\overline{\partial_r U^{+}}ds=0$, and, thus
\begin{equation}\label{all-space}
\Im\ \int_\Gamma U^{-}\ \overline{\partial_n U^{-}}ds -\Im\ \int_\Gamma U^{+}\ \overline{\partial_n U^{+}}ds= 2\kappa\varepsilon(\int_{\Omega_2} |U|^2dx+\int_{\Omega_1}|U|^2dx).
\end{equation} 
Since $U^{+}=U^{-}= S_{\kappa+i\varepsilon}[\phi]$ and $\partial_n
U^{-}-\partial_n U^{+} =\phi$ on $\Gamma$ we see that
$$\Im\ \int_\Gamma S_{\kappa+i\varepsilon} [\phi]\ \overline{\phi}\ ds =2\kappa\varepsilon\int_{\mathbb{R}^2}|U|^2dx$$
and the Lemma follows.
\end{proof}

\begin{lemma}\label{sixp2} Let $\kappa>0$ and $\varepsilon>0$. Then
$$\Im\ \int_\Gamma N_{\kappa+i\varepsilon} [\psi]\ \overline{\psi}\ ds >0,\quad {\rm for\ all}\ \psi\in H^{1/2}(\Gamma),\ \psi\neq 0.$$
\end{lemma}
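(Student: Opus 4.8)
The plan is to mirror the potential-theoretic argument of Lemma~\ref{sixp1}, with the \emph{double} layer potential now playing the role that the single layer potential played there. Concretely, I would set
$$V(z) = D_{\kappa+i\varepsilon}[\psi](z) = \int_\Gamma \frac{\partial G_{\kappa+i\varepsilon}(z-y)}{\partial n(y)}\,\psi(y)\,ds(y),\qquad z\in\mathbb{R}^2\setminus\Gamma,$$
and denote by $V^+$ and $V^-$ the restrictions of $V$ to $\Omega_1$ and $\Omega_2$. As a double layer potential, $V$ solves the Helmholtz equation with wavenumber $\kappa+i\varepsilon$ on each side of $\Gamma$, and, since $\Im(\kappa+i\varepsilon)=\varepsilon>0$, it decays exponentially as $|z|\to\infty$. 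The essential structural difference with Lemma~\ref{sixp1} is a swap of roles in the jump relations: for the double layer potential it is the \emph{normal derivative} that is continuous across $\Gamma$, with common value $\partial_n V^+ = \partial_n V^- = N_{\kappa+i\varepsilon}[\psi]$, while the \emph{trace} jumps, $V^- - V^+ = -\psi$ on $\Gamma$, the sign being fixed by the conventions adopted for the normal $n$ and for the operator $D_k$.

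Next I would run the identical Green's-identity computation that produced \eqref{all-space}. Applying Green's first identity to $V^-$ on $\Omega_2$ and to $V^+$ on $\Omega_1\cap B_R$, taking imaginary parts, and discarding the $\int_{\partial B_R}$ contribution in the limit $R\to\infty$ by exponential decay exactly as before, I obtain
$$\Im\int_\Gamma V^-\,\overline{\partial_n V^-}\,ds - \Im\int_\Gamma V^+\,\overline{\partial_n V^+}\,ds = 2\kappa\varepsilon\left(\int_{\Omega_2}|V|^2\,dx + \int_{\Omega_1}|V|^2\,dx\right).$$
The key point is that the two boundary terms on the left now share the common factor $\overline{\partial_n V}=\overline{N_{\kappa+i\varepsilon}[\psi]}$, so the left-hand side collapses to $\Im\int_\Gamma (V^- - V^+)\,\overline{N_{\kappa+i\varepsilon}[\psi]}\,ds = -\Im\int_\Gamma \psi\,\overline{N_{\kappa+i\varepsilon}[\psi]}\,ds$; using the elementary identity $\Im\int f\,\overline{g}\,ds=-\Im\int g\,\overline{f}\,ds$ this equals $\Im\int_\Gamma N_{\kappa+i\varepsilon}[\psi]\,\overline{\psi}\,ds$. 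I would therefore conclude
$$\Im\int_\Gamma N_{\kappa+i\varepsilon}[\psi]\,\overline{\psi}\,ds = 2\kappa\varepsilon\int_{\mathbb{R}^2}|V|^2\,dx \ge 0,$$
with strict positivity because $\psi\neq 0$ forces a nontrivial trace jump of $V$, and hence $V\not\equiv 0$.

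The main obstacle is bookkeeping rather than analysis. First, the sign tracking is delicate and exactly dual to the single-layer case: whereas Lemma~\ref{sixp1} pairs the continuous trace against the jump of the normal derivative, here one pairs the jump of the trace against the continuous normal derivative, so the roles of $V^\pm$ and $\partial_n V^\pm$ are interchanged, and the conjugation identity together with the orientation sign in $V^- - V^+=-\psi$ must combine to land on $\Im\int N_{\kappa+i\varepsilon}[\psi]\,\overline{\psi}$ with a \emph{positive} sign. Second, since $\psi\in H^{1/2}(\Gamma)$ yields only $N_{\kappa+i\varepsilon}[\psi]\in H^{-1/2}(\Gamma)$ by Theorem~\ref{regL}, the integral $\int_\Gamma N_{\kappa+i\varepsilon}[\psi]\,\overline{\psi}\,ds$ must be read as the $H^{-1/2}$--$H^{1/2}$ duality pairing, and the Green's identities above should be interpreted in the corresponding variational (finite-energy) sense. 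This is standard for $\psi\in H^{1/2}(\Gamma)$, for which $V$ has finite Dirichlet energy on each side, but it is the one place where slightly more care than in the single-layer argument is needed.
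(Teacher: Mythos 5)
Your proposal is correct and follows essentially the same route as the paper's own proof: the double layer potential $V=D_{\kappa+i\varepsilon}[\psi]$, the Green's-identity computation mirroring Lemma~\ref{sixp1} and equation~\eqref{all-space}, and the jump relations $\partial_n V^+=\partial_n V^-=N_{\kappa+i\varepsilon}[\psi]$, $V^+-V^-=\psi$, yielding $\Im\int_\Gamma N_{\kappa+i\varepsilon}[\psi]\,\overline{\psi}\,ds=2\kappa\varepsilon\int_{\mathbb{R}^2}|V|^2\,dx$. Your added remarks on strict positivity (a nontrivial trace jump forces $V\not\equiv 0$) and on reading the boundary integral as an $H^{-1/2}$--$H^{1/2}$ duality pairing make explicit two points the paper leaves implicit, but the argument is the same.
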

\begin{proof} Let
$$V(z)=D_{\kappa+i\varepsilon}[\psi](z)=\int_\Gamma \frac{\partial G_{\kappa+i\varepsilon}(z-y)}{\partial n(y)}\psi(y)ds(y),\ z\in \mathbb{R}^2\setminus \Gamma.$$
Given that $V$ is a solution of the Helmholtz equation with wavenumber
$\kappa+i\epsilon$ in the domain $\Omega_2$ and a radiative solution
of the Helmholtz equation with wavenumber $\kappa+i\epsilon$ in the
domain $\Omega_1$, as in the derivation of equation~\eqref{all-space}
we obtain
$$\Im\ \int_\Gamma \partial_nV^{+}\ \overline{V^{+}}ds -\Im\ \int_\Gamma \partial_n V^{-}\ \overline{V^{-}}ds= 2\kappa\varepsilon(\int_{\Omega_2} |V|^2dx+\int_{\Omega_1\bigcap B_R}|V|^2dx).$$
Since $V^{+}|_\Gamma-V^{-}|_\Gamma =\psi$ and $\partial_n
V^{+}=\partial_n V^{-}=N_{\kappa+i\varepsilon}[\psi]$ on $\Gamma$ we
obtain
$$\Im\ \int_\Gamma N_{\kappa+i\varepsilon} [\psi]\ \overline{\psi}\ ds =2\kappa\varepsilon \int_{\mathbb{R}^2} |V|^2dx$$
from which the result of the Lemma follows.
\end{proof}
 
\section{Increased regularity of the operator $S_{\kappa+i\varepsilon}^x -\sigma^{S,x}_{\kappa+i\varepsilon}$}\label{app2}
This appendix establishes that, given a smooth $2\pi$ periodic
parametrization $x:[0,2\pi]\to\Gamma$ of the curve $\Gamma$, the
difference between the parametric form
$S_{\kappa+i\varepsilon}^x[\psi](x(t))=\frac{i}{4}\int_0^{2\pi}H_0^{(1)}((\kappa+i\varepsilon)|x(t)-x(\tau)|)\psi(x(\tau))|x'(\tau)|d\tau$
of the single layer operator and with wavenumber $\kappa_1 =
\kappa+i\varepsilon$ and the Fourier operator
$\sigma^{S,x}_{\kappa+i\varepsilon}$ defined in
equation~\eqref{eq:defPS2} is a three-order smoothing operator.
\begin{lemma} For all $s\geq\frac{1}{2}$ the operator
  $S_{\kappa+i\varepsilon}^x - \sigma^{S,x}_{\kappa+i\varepsilon}$
  maps $H^{s}[0,2\pi]$ continuously into $H^{s+3}[0,2\pi]$.
\end{lemma}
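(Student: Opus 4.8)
The plan is to reduce the parametrized single-layer operator to a leading Fourier multiplier that agrees, through two orders, with $\sigma_{\kappa_1}^{S,x}$ (here $\kappa_1=\kappa+i\varepsilon$), so that only genuinely lower-order contributions survive in the difference. First I would split off the logarithmic singularity of the kernel. Writing $r=|x(t)-x(\tau)|$, using the classical expansion of $H_0^{(1)}$ in terms of $J_0$ and $Y_0$ together with $\log r=\tfrac12\log\!\big(r^2/(4\sin^2\tfrac{t-\tau}{2})\big)+\tfrac12\log\!\big(4\sin^2\tfrac{t-\tau}{2}\big)$, the kernel decomposes as
$$\frac{i}{4}H_0^{(1)}(\kappa_1 r)=K_1(t,\tau)\,\log\!\Big(4\sin^2\tfrac{t-\tau}{2}\Big)+K_2(t,\tau),\qquad K_1(t,\tau)=-\frac{1}{4\pi}J_0(\kappa_1 r),$$
where $K_1$ and $K_2$ are both $C^\infty$ on $[0,2\pi]^2$ (the quotient $r^2/(4\sin^2\tfrac{t-\tau}{2})$ is smooth and positive for a smooth curve). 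Absorbing the weight into $g(\tau)=\psi(x(\tau))|x'(\tau)|$, whose Fourier coefficients are exactly the $\hat\psi(n)$ that enter the definition of $\sigma_{\kappa_1}^{S,x}$, the operator with the smooth kernel $K_2$ is infinitely smoothing, so all of the order $-1$ behaviour resides in the logarithmic part.

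Next I would Taylor expand the smooth coefficient $K_1(t,\tau)$ in $h=\tau-t$ about the diagonal, $K_1(t,\tau)=\sum_{j\ge 0}\tfrac{1}{j!}\partial_\tau^{\,j}K_1(t,t)\,h^{\,j}$, so that the logarithmic operator becomes a sum of operators with kernels proportional to $\partial_\tau^{\,j}K_1(t,t)\,h^{\,j}\log(4\sin^2\tfrac{t-\tau}{2})$. The decisive structural observation is that $K_1$ is a smooth \emph{function of $r^2=|x(t)-x(\tau)|^2$}, and $r^2$ has neither a constant nor a linear term in $h$ (it equals $|x'(t)|^2h^2+O(h^3)$); since $\partial_\tau(r^2)$ vanishes on the diagonal, this forces $\partial_\tau K_1(t,t)=0$. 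Thus the $j=0$ term, with the constant coefficient $K_1(t,t)=-\tfrac{1}{4\pi}$, is a \emph{pure} convolution operator, the $j=1$ term \emph{vanishes}, and every remaining term carries a factor $h^{\,j}$ with $j\ge 2$.

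I would then read off and match symbols. The periodic convolution against $\log(4\sin^2\tfrac{t-\tau}{2})$ is the Fourier multiplier with symbol $-2\pi/|n|$ for $n\neq 0$, so the $j=0$ term has symbol $(-\tfrac{1}{4\pi})\cdot(-2\pi/|n|)=\tfrac{1}{2|n|}$, which is precisely the leading term of $p^S(n;\kappa_1)=\tfrac{1}{2}(n^2-\kappa_1^2)^{-1/2}=\tfrac{1}{2|n|}+\tfrac{\kappa_1^2}{4|n|^3}+O(|n|^{-5})$. Since $p^S$ has \emph{no} order $-2$ term and the $j=1$ contribution of the single-layer operator vanishes, the symbols of $S_{\kappa_1}^x$ and $\sigma_{\kappa_1}^{S,x}$ agree through orders $-1$ and $-2$; hence their difference has order $-3$ and maps $H^{s}[0,2\pi]$ continuously into $H^{s+3}[0,2\pi]$, as claimed.

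The main technical obstacle is the quantitative statement that an integral operator whose kernel is a smooth factor vanishing to order $j$ on the diagonal times $\log(4\sin^2\tfrac{t-\tau}{2})$ is smoothing by exactly $j+1$ orders; this is what makes the $j\ge 2$ terms of order $\le -3$ and makes the $j=0$ multiplier differ from $p^S$ only at order $-3$. I would establish it by expanding the periodic kernels $h^{\,j}\log(4\sin^2\tfrac{t-\tau}{2})$ in Fourier series and showing that their coefficients gain one extra order of decay per power of $h$, treating the variable-coefficient (hence non-multiplier) terms $j\ge 2$ through the corresponding kernel-regularity bounds; the isolated mode $n=0$, where the logarithmic convolution annihilates constants while $p^S(0)$ is finite, contributes only a smoothing rank-one correction. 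This is exactly the circle of estimates carried out for the single-layer operator in~\cite{turc1,lint-br}, and the present argument is a direct adaptation.
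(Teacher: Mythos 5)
Your proof is correct, but it takes a genuinely different route from the paper's. The paper first reduces to the unit circle: using the small-argument expansion of $H_0^{(1)}$ it shows the kernel on $\Gamma$ differs from the kernel $\frac{i}{4}H_0^{(1)}(\kappa_1|e^{it}-e^{i\tau}|)|x'(\tau)|$ by a smooth function plus an $\mathcal{O}(|f|^2\log|f|)$ term (each three-order smoothing), and then exploits the exact diagonalization of the circular single layer, whose eigenvalues $\frac{i\pi}{2}J_{|n|}(\kappa_1)H^{(1)}_{|n|}(\kappa_1)=I_{|n|}(\varepsilon-i\kappa)K_{|n|}(\varepsilon-i\kappa)$ are matched to $p^S(n;\kappa_1)$ through the uniform large-order Bessel asymptotics (Formulas 9.7.7--9.7.8 of~\cite{Abramowitz}). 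You instead stay on the curve: the Kusmaul--Martensen logarithmic splitting, the Taylor expansion of the smooth coefficient $-\frac{1}{4\pi}J_0(\kappa_1 r)$ about the diagonal (with the neat observation that it is a function of $r^2$, killing the linear term), and direct matching of Fourier symbols. Your route avoids special-function asymptotics entirely and reads as elementary symbol calculus, at the price of having to prove the quantitative smoothing lemma for kernels $h^j\log\bigl(4\sin^2\frac{t-\tau}{2}\bigr)$; the paper's route gets the $\mathcal{O}(|n|^{-3})$ symbol agreement for free from Bessel asymptotics, at the price of the circle-comparison step. One technical point you should make explicit when carrying out your deferred lemma: the factor $h^j=(\tau-t)^j$ is not $2\pi$-periodic, so for odd $j$ the naive periodization of $h^j\log\bigl(4\sin^2\frac{t-\tau}{2}\bigr)$ has a jump across $|t-\tau|=\pi$ that by itself would limit the Fourier-coefficient decay to $\mathcal{O}(|n|^{-1})$ and ruin the count; the standard fix is to expand in powers of $2\sin\frac{\tau-t}{2}$ rather than $h$, or to insert a smooth cutoff supported near the diagonal and absorb the remainder into the infinitely smoothing part. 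With that adjustment your smoothing-by-$(j+1)$ claim is the standard estimate of~\cite{turc1,lint-br}, and the argument is complete.
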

\begin{proof} Using the small $z$ expression~\cite{Abramowitz}
$$\frac{i}{4}H_0^{(1)}(k|z|)=-\frac{1}{2\pi}\log{|z|}+\frac{i}{4}-\frac{1}{2\pi}\left(\log{\frac{k}{2}}+C\right)+\mathcal{O}(|z|^2\log{|z|}),\quad z\to 0$$
where $C$ is Euler's constant, we see that the kernel of the operator
$S_{\kappa+i\varepsilon}^x$ can be expressed in the form
\begin{eqnarray}
\label{eq:expression_kernel}
\frac{i}{4}H_0^{(1)}((\kappa+i\varepsilon)|x(t)-x(\tau)|)|x'(\tau)| &=&\frac{i}{4}H_0^{(1)}((\kappa+i\varepsilon)|e^{it}-e^{i\tau}|)|x'(\tau)| + \Phi_1(t,\tau)\nonumber\\
&+&\Phi_2(t,\tau)
\end{eqnarray}
where $\Phi_1(t,\tau)$ is a smooth function of $(t,\tau)$ and
$\Phi_2(t,\tau)=\mathcal{O}(|f(t,\tau)|^2\log{|f(t,\tau)|})$ as
$|f(t,\tau)|\to 0$, and where $f(t,\tau)=t-\tau\ {\rm mod}\ 2\pi$. It
follows that for given $\psi\in H^s[0,2\pi]$ we may write
\[
S_{\kappa+i\varepsilon}^x[\psi](x(t))=S_{\kappa+i\varepsilon}^{\mathbb{S}^1,x}[\psi](t)+S_{\Phi_1}^x[\psi](x(t))+S_{\Phi_2}^x[\psi](x(t))
\]
where
\[
S_{\kappa+i\varepsilon}^{\mathbb{S}^1,x}[\psi](t)=\frac{i}{4}\int_0^{2\pi}H_0^{(1)}((\kappa+i\varepsilon)|e^{it}-e^{i\tau}|)\psi(x(\tau))|x'(\tau)|d\tau
\]
and
\[
S_{\Phi_j}^x[\psi](x(t))=\int_0^{2\pi}\Phi_j(t,\tau)\psi(x(\tau))d\tau,\ j=1,2.
\]
Given that the kernel $\Phi_1(t,\tau)$ is a smooth function of
$(t,\tau)$ and taking into account the behavior of the kernel
$\Phi_2(t,\tau)$ as $|f(t,\tau)|\to 0$, we see that
$S_{\Phi_j}^x:H^s[0,2\pi]\to H^{s+3}[0,2\pi],\ j=1,2$. It thus
suffices to show that the difference
$S_{\kappa+i\varepsilon}^{\mathbb{S}^1,x}[\psi](x(t)) -
S_{\kappa+i\varepsilon}^x[\psi](x(t))$ defines an operator that maps
$H^{s}[0,2\pi]$ continuously into $H^{s+3}[0,2\pi]$.

% Clearly the $S_{\kappa+i\varepsilon}^{\mathbb{S}^1,x}[\psi](x(t))$
% equals the result of the application of the single layer operator for
% the unit circle $\Gamma=\mathbb{S}^1$ with wavenumber
% $\kappa+i\varepsilon$ to the periodic density $(\psi\circ
% x)|x'|$. 

To do this we use the Fourier expansion
$\psi(x(\tau))|x'(\tau)|=\sum_{n\in\mathbb{Z}}\hat{\psi}(n)e^{in\tau}$
and we
obtain $$\frac{i}{4}\int_0^{2\pi}H_0^{(1)}((\kappa+i\varepsilon)|e^{it}-e^{i\tau}|)\psi(x(\tau))|x'(\tau)|d\tau=\sum_{n=-\infty}^{\infty}\hat{\psi}(n)\left(\frac{i}{4}\int_0^{2\pi}H_0^{(1)}((\kappa+i\varepsilon)|e^{it}-e^{i\tau}|)e^{in\tau}d\tau\right),$$
so that, in view of well known relations for circular
scatterers~\cite{Kress1985} there follows
\begin{equation}\label{product}
  \frac{i}{4}\int_0^{2\pi}H_0^{(1)}((\kappa+i\varepsilon)|e^{it}-e^{i\tau}|)e^{in\tau}d\tau=\frac{i\pi}{2}J_{|n|}(\kappa+i\varepsilon)H_{|n|}^{(1)}(\kappa+i\varepsilon)e^{in t}.
\end{equation}
Now, let
$=s_{|n|}(\kappa+i\varepsilon)=\frac{i\pi}{2}J_{|n|}(\kappa+i\varepsilon)H_{|n|}^{(1)}(\kappa+i\varepsilon)$.
Invoking the representation of the functions
$J_{|n|}(\kappa+i\varepsilon)$ and
$H_{|n|}^{(1)}(\kappa+i\varepsilon)$ in terms of the Bessel functions
of the third kind, so that $\frac{i\pi
}{2}J_{|n|}(\kappa+i\varepsilon)H_{|n|}^{(1)}(\kappa+i\varepsilon)=I_{|n|}(\varepsilon-i\kappa)K_{|n|}(\varepsilon-i\kappa)$,
together with the uniform asymptotic expansions as
$\nu\rightarrow\infty$ and $|\arg{z}|\leq \frac{\pi}{2}-\delta$,
\begin{eqnarray}\label{eq:estimateK}
I_\nu(\nu z)&\sim&\frac{1}{\sqrt{2\pi\nu}}\frac{e^{\nu\mu}}{(1+z^2)^\frac{1}{4}}(1+u_1(t)\nu^{-1}+\mathcal{O}(\nu^{-2}))\nonumber\\
K_\nu(\nu z)&\sim&\frac{\sqrt{\pi}}{\sqrt{2\nu}}\frac{e^{-\nu\mu}}{(1+z^2)^\frac{1}{4}}(1-u_1(t)\nu^{-1}+\mathcal{O}(\nu^{-2}))
\end{eqnarray}
where $\mu=\sqrt{1+z^2}+\ln\frac{z}{1+\sqrt{1+z^2}}$,
$t=\frac{1}{\sqrt{1+z^2}}$, and $u_1(t)=\frac{3t-5t^3}{24}$ (Formulas
9.7.7 and 9.7.8 in~\cite{Abramowitz}) we obtain 
$$s_{|n|}(\kappa+i\varepsilon)=\frac{1}{2\sqrt{n^2+(\varepsilon-i\kappa)^2}}\left(1+\mathcal{O}(|n|^{-2})\right)=p^S(n;\kappa+i\varepsilon)+\mathcal{O}(|n|^{-3}),\quad |n|\to\infty.$$
The last identity implies that
$S_{\kappa+i\varepsilon}^{\mathbb{S}^1,x}-\sigma^{S,x}_{\kappa+i\varepsilon}$
maps $H^s[0,2\pi]$ continuously into $H^{s+3}[0,2\pi]$, and the proof
of the lemma is thus complete.\end{proof}

\end{document}